\documentclass[reqno]{amsart}

\usepackage{amssymb,amsfonts, latexsym,amsmath,hhline,array,longtable,enumitem}
\usepackage{pdfsync,color, comment,colortbl, mathrsfs,stmaryrd,cite,graphicx,lscape}
%\usepackage[matrix,arrow,curve]{xy}
%\input xy
%\xyoption{all}

%\usepackage{refcheck}

\usepackage{ifpdf}
\ifpdf 
\usepackage[colorlinks=true, citecolor=blue, linkcolor=blue, urlcolor=blue]{hyperref} 
\fi

\newtheorem{formula}{}[section]
\newtheorem{definition}[formula]{Definition}
\newtheorem{corollary}[formula]{Corollary}
\newtheorem{remark}[formula]{Remark}
\newtheorem{lemma}[formula]{Lemma}
\newtheorem{theorem}[formula]{Theorem}

\newtheorem{problem}[formula]{Problem}

\newtheorem{proposition}[formula]{Proposition}

\newcommand{\R}{\mathcal{R}}
\newcommand{\F}{\mathbb{F}}
\newcommand{\Fq}{\F_q}
\newcommand{\rk}{\operatorname{rk}}

\newcommand{\mC}{\mathcal{C}}
\DeclareMathOperator{\sym}{Sym}

\newcommand{\Bil}{\operatorname{Bil}}

\newcommand{\bfn}{\mathbf {n}}
\newcommand{\bfm}{\mathbf {m}}
\newcommand{\spac}{\F^{\bfn\times\bfm}_q}
\newcommand{\srk}{\mathrm{srk}}

\usepackage{stmaryrd}
\newcommand{\db}[1]{\llbracket #1 \rrbracket}
\newcommand{\floor}[1]{{\left\lfloor{#1}\right\rfloor}}

\def\cX{{\mathcal X}}
\def\cY{{\mathcal Y}}

\newcommand{\sR}{\mathsf{R}}
\newcommand{\sT}{\mathsf{T}}

\DeclareMathOperator{\WL}{\mathsf{WL}}
 
\begin{document}

\title{A linear programming bound \\for sum-rank metric codes}
\author{Aida Abiad}
\address{Department of Mathematics and Computer Science, 
Eindhoven University of Technology, The Netherlands} 
\email{a.abiad.monge@tue.nl}
\author{Alexander L. Gavrilyuk}
\address{Interdisciplinary Faculty of Science and Engineering, 
Shimane University, Matsue, Japan}
\email{gavrilyuk@riko.shimane-u.ac.jp}
\author{Antonina P. Khramova}
\address{Department of Mathematics and Computer Science, 
Eindhoven University of Technology, The Netherlands} 
\email{a.khramova@tue.nl}
\author{Ilia Ponomarenko}
\address{Hainan University, Haikou, China}
\email{iliapon@gmail.com}

\renewcommand{\shortauthors}{Abiad, Gavrilyuk, Khramova, Ponomarenko}
\renewcommand{\shorttitle}{A linear programming bound for sum-rank metric codes}

\date{}

\begin{abstract}
We derive a linear programming bound on the maximum cardinality of error-correcting codes in the sum-rank metric. Based on computational experiments on relatively small instances, 
we observe that the obtained bounds outperform all previously known bounds.\\
%An association scheme, suitable for studying error-correcting codes in the sum-rank metric, is proposed. This
%is then used to obtain bounds for the sum-rank metric code sizes via the linear programming method. Based on computational experiments on small graphs, it is observed that the obtained bounds outperform all the existing combinatorial and algebraic bounds.

\noindent \textbf{Keywords:} sum-rank metric code, linear programming bound, sum-rank metric graph, association scheme, eigenvalues
\end{abstract}

\maketitle

%%%%%%%%%%%%%%%%%%%%%%%%%%%%%%%%%
\section{Introduction}
%%%%%%%%%%%%%%%%%%%%%%%%%%%%%%%%%

Sum-rank metric codes have received a considerable amount of attention over the last years, especially because of their performance of multi-shot network coding; see~\cite{booksrk,NU2010,BZ2024,byrne2021fundamental,byrne2022anticodes,AAA2024,chen2023new,MP18,MP19,MP20a,MPK19,MPK19a,MP20,NSZ2023,BP2021,ott2022covering}. A sum-rank metric code is a subset of matrix tuples of fixed dimensions, with every matrix being defined over the same finite 
field $\F_q$ (see Definition \ref{def:srk}). The distance between 
two such tuples is the sum of ranks of differences between 
respective elements of the tuples. 
As such, it is a generalization of both rank-metric codes and Hamming codes.

A central question in coding theory, concerning the maximum cardinality of a code with a given minimum distance, has recently been addressed in the context of the sum-rank metric. In~\cite{byrne2021fundamental}, properties of sum-rank metric codes are investigated, and several bounds based on classical coding theory approaches are shown. In~\cite{AAA2024}, the authors consider the sum-rank metric space from an algebraic graph theoretical point of view, introduce a sum-rank metric graph, and propose an eigenvalue bound, referred to as Ratio-type bound, by using a connection between the cardinality of a code with a minimum distance~$k+1$ and the $k$-independence number of the graph. This bound is calculated using the eigenvalues of the adjacency matrix of the sum-rank metric graph. An explicit closed formula is known for the case when the minimum distance is $3$ \cite{ACF2019} and $4$ \cite{KN2022}, and otherwise it can be calculated via a linear programming (LP) method which uses the so-called minor polynomials~\cite{F2020}. A geometrical approach to sum-rank metric codes using subspace designs is investigated in~\cite[Section 5]{SZ2022}.

While a classical approach to estimate the maximum cardinality of a code is to use Delsarte's LP method (which was first introduced in the context of the Hamming metric \cite{DelsarteLP}), this has not yet been developed for sum-rank metric codes. The idea behind this method is to consider 
a code as a subset of points of an 
%the codewords as points in the 
association scheme, and then formulate an optimization 
problem where the objective is to maximize the size of 
a code subject to linear constraints derived 
by leveraging the properties of the association scheme.
By solving this linear program, one can obtain upper bounds 
on the number of codewords. 
This, combined with the duality of linear programming, provides one of the most powerful methods for bounding the size of codes arising from association schemes. Delsarte's LP approach has already been successfully applied to several metrics, \emph{e.g.}, for Hamming codes \cite{DelsarteLP}, rank-metric codes \cite{D1978}, bilinear alternating forms \cite{delsarte1975alternating}, 
permutation codes \cite{DIL2020}, and Lee codes \cite{A1982,S1986}, 
but also for newer metrics like subspace codes \cite{R2010}.

%In particular, in his thesis, Delsarte~\cite{DelsarteLP} introduced the notion of the Lee scheme, which was later studied further and used to apply the Delsarte's LP approach~\cite{A1982,S1986}. The higher complexity of the Lee scheme as opposed to the Hamming scheme led to improvements only for a few values of code parameters compared to other approaches.

Despite its previous success, the LP method has not yet been applied to the sum-rank metric. This is perhaps due to the fact that, unlike for the Hamming or rank metric, the sum-rank metric does not give rise to an association scheme in a straightforward way by taking the distance relations between the codewords. 
In particular, except some special cases, the sum-rank metric graphs are 
not distance-regular~\cite[Proposition~11]{AAA2024}, unlike the Hamming graphs and 
the bilinear forms graphs 
corresponding to the rank-metric space.
 
In this paper, we develop an approach to utilizing 
the Delsarte's LP method to estimate the maximum 
cardinality of a sum-rank metric code with a given minimum distance. From computational experiments for small values of the parameters, we observe that the new LP bound outperforms all previously known bounds for sum-rank metric codes given in \cite{byrne2021fundamental} and \cite{AAA2024}. To prove our results, we describe a way to construct an association scheme for a sum-rank metric space. We do so by making use of the structure of the sum-rank metric graph as a Cartesian product of smaller rank-metric graphs~\cite[Proposition 9]{AAA2024}, and considering a direct product of association schemes of the Cartesian factors. Moreover, we investigate the coherent closure of a sum-rank metric graph and discuss conditions under which it is equivalent to the association scheme we construct. %We also discuss the relation between the LP Ratio-type bound and the new LP bound in the rank metric case.

The structure of this paper is as follows. Section~\ref{sec:prelim} provides the preliminaries for the sum-rank metric space, known bounds on the size of sum-rank metric codes, as well as reminds some necessary background 
on association schemes and Delsarte's LP method. Section~\ref{sec:coco} introduces a way to construct a suitable association scheme for a sum-rank metric graph using the notion of a direct product of association schemes. Such association scheme is then used to derive the LP bound. 
In addition, particular attention is given to the coherent closure of the graph, and the conditions under which it coincides with the direct product of the coherent closures of the Cartesian factors of the graph. These conditions are also discussed in the more general context of graphs that can be seen as a Cartesian product. %Section~\ref{sec:LPequiv} discusses the relation between Delsarte's LP~\cite{DelsarteLP} and the Ratio-type LP bound~\cite{AAA2024} for the case of rank-metric codes.
Finally, Section~\ref{sec:boundcomp} contains a computational comparison of the new LP bound against all the previously known bounds for sum-rank codes.

%%%%%%%%%%%%%%%%%%%%%%%%%%%%%%%%%
\section{Preliminaries}\label{sec:prelim}
%%%%%%%%%%%%%%%%%%%%%%%%%%%%%%%%%

In this section, we will recall some definitions 
and results on sum-rank metric codes and association schemes.

\subsection{Basic notation}
For a positive integer $n$, let $[n]$ denote the set $\{1,\dots,n\}$ and put $\db{n} := \{0\}\cup[n]$. 

Let $\Omega$ be a finite set and $R$ a binary relation on $\Omega$. The \textbf{adjacency matrix} of $R$ is a matrix $A\in\mathbb{R}^{\Omega\times\Omega}$ defined as follows: $(A)_{\alpha,\beta}=1$ if $(\alpha,\beta)\in R$ and $0$ otherwise.

By a \textbf{graph} we mean a finite simple undirected graph, \textit{i.e.}, a pair $\Gamma=(\Omega,E)$ of a finite set $\Omega$ of vertices and a set $E$ of unordered pairs from $\Omega$, $E\subseteq {\Omega\choose 2}$, called edges. Note that the edge set $E$ can also be identified with 
an irreflexive symmetric (\textbf{adjacency}) relation on~$\Omega$. Then the adjacency matrix of a graph $\Gamma$ is the adjacency matrix of~$E$.

Recall that the \textbf{Cartesian product} of graphs $\Gamma_1=(\Omega_1,E_1)$ and $\Gamma_2=(\Omega_2,E_2)$ is the graph\footnote{Note that in \cite{AAA2024}, the notation $\Gamma_1\times \Gamma_2$ was used to denote the Cartesian product. However, $\Box$ appears to be more common, \emph{e.g.}, see \cite{ProductBook}.} 
$\Gamma_1\Box \Gamma_2$ whose vertex set is 
$\Omega_1\times \Omega_2$, where vertices 
$(\alpha_1,\alpha_2)$ and $(\beta_1,\beta_2)$ are adjacent 
if either $\alpha_1=\beta_1$ and $(\alpha_2,\beta_2)\in E_2$, 
or $\alpha_2=\beta_2$ and $(\alpha_1,\beta_1)\in E_1$. 
If $A_i$, $i=1,2$, denotes the adjacency matrix 
of a graph $\Gamma_i$, and $A$ is the adjacency matrix 
of $\Gamma_1\Box \Gamma_2$, then  
\begin{equation}\label{eq:Acartprod}
    A=A_1\otimes I_{\Omega_2}+I_{\Omega_1}\otimes A_2,
\end{equation}
where $\otimes$ means the Kronecker product of matrices, 
and $I_{\Omega}$ is the identity matrix of $\mathbb{R}^{\Omega\times\Omega}$. 
This definition can be inductively extended 
to a Cartesian product of any finite number of graphs.

%%%%%%%%%%%%%%%%%%%%%%%%%%%%%%%%%
\subsection{Sum-rank metric and sum-rank metric graphs}
%%%%%%%%%%%%%%%%%%%%%%%%%%%%%%%%%

Let $t$ be a positive integer and let ${\bf n}=(n_1,\ldots,n_t)$, ${\bf m}=(m_1,\ldots,m_t)$ be tuples of positive integers 
with $m_1 \geq m_2 \geq \cdots \geq m_t$, 
and $m_i\geq n_i$ for all $i\in [t]$. %Also set $N:= \sum\limits_{i=1}^t n_i$. 

For a prime power $q$ and 
positive integers $m\geq n$, 
let $\mathbb{F}_q^{n\times m}$
denote the vector space of all $n×\times m$ matrices over the finite field $\mathbb{F}_q$. 
Denote by $\rk(M)$ the rank of a matrix 
$M\in \mathbb{F}_q^{n\times m}$.

 \begin{definition}\label{def:srk}
The \emph{\textbf{sum-rank metric space}} is 
an $\mathbb{F}_q$-linear vector space 
$\mathbb{F}_q^{{\bf n}\times {\bf m}}$ defined as follows:
\[
\mathbb{F}_q^{{\bf n}\times {\bf m}}:=
\mathbb{F}_q^{n_1\times m_1}\times \cdots \times
\mathbb{F}_q^{n_t\times m_t},
\]
where $\times$ stands for the direct product of vector spaces.
The \emph{\textbf{sum-rank}} of an element $X=(X_1,\ldots, X_t)\in \mathbb{F}_q^{{\bf n}\times {\bf m}}$ 
is $\srk(X) = \sum_{i=1}^t \rk(X_i)$.
The \emph{\textbf{sum-rank distance (metric)}} between 
$X,Y \in \mathbb{F}_q^{{\bf n}\times {\bf m}}$ is $\srk(X - Y)$.
In case $t=1$ we sometimes refer to the sum-rank distance as simply the \emph{\textbf{rank distance}}.
\end{definition}

It is easy to see that the sum-rank distance is indeed a distance on $\mathbb{F}_q^{{\bf n}\times {\bf m}}$.
%, i.e., it is symmetric, for any $X$ and $Y$ $\srk(X-Y)\geq 0$ with $\srk(X-Y) = 0$ if and only if $X=Y$, and it also satisfies the triangular inequality. 

\begin{definition}
A \emph{\textbf{sum-rank metric code}} is a non-empty subset $\mC \subseteq \mathbb{F}_q^{{\bf n}\times {\bf m}}$. 
The \emph{\textbf{minimum sum-rank distance}} of a code $\mC$ with $|\mC| \geq 2$ is defined 
by $$\srk(\mC) := \min\left\{\srk(X-Y)\colon X, Y \in \mC, \, X \neq Y\right\}.$$
\end{definition}

The following definition, which is introduced in \cite{AAA2024}, allows one to study sum-rank metric codes from the graph-theoretical perspective. 

\begin{definition}\label{def:srkgraph}
    The \emph{\textbf{sum-rank metric graph}} $\Gamma\left(\mathbb{F}_q^{{\bf n}\times {\bf m}}\right)$ is a graph whose vertex set is $\mathbb{F}_q^{{\bf n}\times {\bf m}}$, and two vertices $X,Y$ are adjacent if and only if $\srk(X-Y)=1$.
\end{definition}

 We sometimes omit $\mathbb{F}_q^{{\bf n}\times {\bf m}}$ and simply write $\Gamma$ when it is clear which sum-rank-metric space the graph corresponds to.

A simple yet important observation is that, for any two vertices $X,Y$ of the graph~\mbox{$\Gamma\left(\mathbb{F}_q^{{\bf n}\times {\bf m}}\right)$,} 
the geodesic distance between them coincides with the sum-rank distance $\srk(X-Y)$~\cite[Proposition 4.3]{byrne2022anticodes}.

The following two special cases covered by Definition~\ref{def:srk} have been long known and well-studied in the coding theory 
literature: in case $n_i=m_i=1$ for all $i\in[t]$, it is equivalent to 
the Hamming metric on the Hamming space $\Fq^t$, 
while in case $t=1$, the sum-rank metric reduces to the rank metric on 
the space $\mathbb{F}_q^{n\times m}$. 
Hence the sum-rank metric is a generalization of both the Hamming metric 
and the rank metric. In the former case, 
the corresponding sum-rank metric graphs 
are well known in the literature 
as Hamming graphs, denoted $H(t,q)$, while in the latter 
case those are bilinear forms graphs.
Recall that the \textbf{bilinear forms graph}, sometimes 
denoted by $\Bil_q(n,m)$, 
is a graph with $\mathbb{F}_q^{{n}\times {m}}$ as the vertex set, where two vertices (matrices) 
are adjacent if and only if the 
rank distance between them is exactly one.

A graph $\Gamma$ with diameter $D$ is said to be \textbf{distance-regular} if for all $i,j,k\in\db{D}$ and for each pair of vertices $x$ and $y$ at distance $k$ from each other the number of vertices that are at distance $i$ from $x$ and at distance $j$ from $y$ is equal to a constant $p^k_{i,j}$ that does not depend on the choice of vertices $x$ and $y$.

Note that both Hamming graphs and bilinear forms graphs are distance-regular, see~\cite{BCNbookDRG}.

\begin{proposition} \cite[Proposition~9]{AAA2024}\label{prop:prod}
    Let $\Gamma_i=\Gamma\left(\mathbb{F}_q^{{n}_i\times {m}_i}\right)$ for $i\in [t]$. 
    Then $\Gamma_i$ is isomorphic to the bilinear forms graph $\Bil_q(n_i,m_i)$ and 
    the sum-rank metric graph $\Gamma\left(\mathbb{F}_q^{{\bf n}\times {\bf m}}\right)$ is the Cartesian product $\Gamma_1\Box\cdots\Box\Gamma_t$. 
\end{proposition}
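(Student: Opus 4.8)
The plan is to chase through the definitions on both sides of the asserted equality and check that the two graphs have the same vertex set and the same edge set. The first assertion is essentially a specialization: fixing $i\in[t]$ and taking $t=1$ in Definition~\ref{def:srk}, on $\mathbb{F}_q^{n_i\times m_i}$ the sum-rank distance is literally the rank distance, $\srk(X_i-Y_i)=\rk(X_i-Y_i)$. Hence, by Definition~\ref{def:srkgraph}, two matrices are adjacent in $\Gamma_i=\Gamma\bigl(\mathbb{F}_q^{n_i\times m_i}\bigr)$ exactly when their rank distance is $1$, which is precisely the adjacency relation defining the bilinear forms graph $\Bil_q(n_i,m_i)$. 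So $\Gamma_i$ and $\Bil_q(n_i,m_i)$ coincide, in particular are isomorphic.

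For the second assertion I would first record the adjacency rule in an iterated Cartesian product: by induction on $t$, starting from the two-factor definition recalled in Section~\ref{sec:prelim} together with~\eqref{eq:Acartprod}, vertices $(\alpha_1,\dots,\alpha_t)$ and $(\beta_1,\dots,\beta_t)$ of $\Gamma_1\Box\cdots\Box\Gamma_t$ are adjacent if and only if there is exactly one index $j\in[t]$ with $\alpha_j\neq\beta_j$ and, for that $j$, $(\alpha_j,\beta_j)\in E_j$. The inductive step only requires splitting into the case where the unique differing coordinate lies among the first $t-1$ factors and the case where it is the last one. On the other hand, for $X=(X_1,\dots,X_t)$ and $Y=(Y_1,\dots,Y_t)$ in $\mathbb{F}_q^{{\bf n}\times{\bf m}}$ one has $\srk(X-Y)=\sum_{i=1}^t\rk(X_i-Y_i)$, a sum of non-negative integers in which $\rk(X_i-Y_i)=0$ precisely when $X_i=Y_i$. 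Thus $\srk(X-Y)=1$ holds if and only if $X_i=Y_i$ for all $i$ except one index $j$ and $\rk(X_j-Y_j)=1$ there; using the first assertion to rewrite $\rk(X_j-Y_j)=1$ as $(X_j,Y_j)\in E_j$, this is exactly the adjacency rule for $\Gamma_1\Box\cdots\Box\Gamma_t$ stated above. Since both graphs also have the common vertex set $\mathbb{F}_q^{{\bf n}\times{\bf m}}=\prod_{i=1}^t\mathbb{F}_q^{n_i\times m_i}$, they are equal.

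The argument is essentially a definition chase, and I do not expect a genuine obstacle; the only point needing a little care is the inductive description of adjacency for a Cartesian product of more than two graphs (the associativity of $\Box$ and the ``differ in exactly one coordinate, and be adjacent there'' rule), so that is where I would write out the details. Everything else follows immediately from the additivity of $\srk$ over the tuple and the fact that a matrix has rank $0$ if and only if it is the zero matrix.
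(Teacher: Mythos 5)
Your proof is correct: the identification of $\Gamma_i$ with $\Bil_q(n_i,m_i)$ is immediate from the $t=1$ case of Definition~\ref{def:srk}, and the equivalence ``$\srk(X-Y)=1$ iff the tuples differ in exactly one coordinate, where the rank distance is $1$'' is exactly the Cartesian-product adjacency rule. The paper imports this statement from \cite[Proposition~9]{AAA2024} without reproducing a proof, and your definition chase (including the inductive handling of the $t$-fold product) is the standard argument one would expect there, so nothing further is needed.
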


Graphs that are Cartesian products of $t$ distance-regular graphs are examples of $t$-distance-regular graphs considered, \emph{e.g.}, in~\cite{BVZZ2023}.

Note that in case $n_i=m_i=1$ for all $i\in[t]$, the graph $\Bil_q(1,1)$ is 
a complete graph on $q$ vertices, 
and $\Gamma\left(\mathbb{F}_q^{{\bf n}\times {\bf m}}\right)$ is the Cartesian 
product of $t$ complete graphs on $q$ 
vertices, which is the Hamming graph 
$H(t,q)$.

%%%%%%%%%%%%%%%%%%%%%%%%%%%%%%%%%
\subsection{Coding theory bounds}
%%%%%%%%%%%%%%%%%%%%%%%%%%%%%%%%%

Several %coding 
bounds on the maximum size of a sum-rank metric code $\mC\subseteq\F^{\bfn\times\bfm}_q$ with $|\mC|\geq2$ and 
a given minimum distance $\srk(\mC)\geq d$ were introduced in~\cite{byrne2021fundamental}. The first four bounds are induced by the following connection to the Hamming-metric 
case: %the sum-rank metric code $\mC$ 
$|\mC|$ is upper-bounded by the size of a Hamming code over $\F_{q^m}$ of length $N$ and minimum distance at least $d$, where $m=\max_{i\in[t]} m_i$ and $N = \sum_{i\in[t]} n_i$.

\begin{theorem}[see~\text{\cite[Theorem III.1]{byrne2021fundamental}}]\label{thm:induced} Let $m=\max_{i\in[t]} m_i$ and let $\mC\subseteq\spac$ be a sum-rank metric code with $|\mC|\geq 2$ and $\srk(\mC) \ge d$. The following hold:
\begin{center}
    \begin{tabular}{ll}
     \textbf{Induced Singleton bound:} & $|\mC|\leq q^{m(N-d+1)}$, \\[0.5cm]
     \textbf{Induced Hamming bound:} & $|\mC|\leq \floor{\frac{q^{mN}}{\sum_{s=0}^{\floor{(d-1)/2}}\binom{N}{s}(q^m-1)^s}}$, \\[0.5cm]
     \textbf{Induced Plotkin bound:} & $|\mC|\leq \floor{\frac{q^{m}d}{q^md-(q^m-1)N}}$ if $d>(q^m-1)N/q^m$, \\[0.5cm]
     \textbf{Induced Elias bound:} & $|\mC|\leq \floor{\frac{Nd(q^m-1)}{q^mw^2-2Nw(q^m-1)+(q^m-1)Nd}\cdot\frac{q^{mN}}{V_w(\F^N_{q^m})}}$. \\[0.5cm]
\end{tabular}
\end{center}
In the Induced Elias bound, $w$ is any integer between $0$ and $N(q^m-1)/q^m$ such that the denominator is positive, and $V_w(\F_{q^m}^N)=\sum_{i=0}^w {N\choose i}(q^m-1)^i$, \textit{i.e.}, the cardinality of any ball of radius $w$ in $\F_{q^m}^N$ with respect to the Hamming distance.
\end{theorem}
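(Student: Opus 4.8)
The plan is to reduce the statement to the corresponding classical bounds for $Q$-ary Hamming-metric codes with $Q=q^m$, by exhibiting an injective, weight-non-decreasing embedding of $\spac$ into $\F_{q^m}^N$, where $N=\sum_{i\in[t]}n_i$. Once such an embedding $\Phi$ is available, the image $\Phi(\mC)$ is a code of length $N$ over an alphabet of size $q^m$ whose minimum Hamming distance is at least $\srk(\mC)\ge d$, and each of the four displayed inequalities is then obtained by substituting $Q=q^m$ and length $N$ into the classical Singleton, sphere-packing (Hamming), Plotkin, and Elias bounds respectively.

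To build the embedding, recall $m=\max_{i\in[t]}m_i$. For each $i\in[t]$, pad every matrix $X_i\in\F_q^{n_i\times m_i}$ on the right with $m-m_i$ zero columns to obtain $\widehat{X}_i\in\F_q^{n_i\times m}$; clearly $\rk(\widehat{X}_i)=\rk(X_i)$. Fixing an $\F_q$-linear isomorphism $\F_q^{m}\cong\F_{q^m}$ and applying it to each of the $n_i$ rows of $\widehat{X}_i$ turns $\widehat{X}_i$ into a vector $\varphi_i(X_i)\in\F_{q^m}^{n_i}$; concatenating these over $i\in[t]$ yields an $\F_q$-linear map $\Phi\colon\spac\to\F_{q^m}^N$. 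The key observation is the elementary inequality $\rk(X_i)\le\#\{\text{nonzero rows of }\widehat{X}_i\}$, whose right-hand side is exactly the number of nonzero coordinates of $\varphi_i(X_i)$; summing over $i$ gives
\[
\srk(X)=\sum_{i\in[t]}\rk(X_i)\ \le\ w_{\mathrm H}\bigl(\Phi(X)\bigr),
\]
where $w_{\mathrm H}$ denotes the Hamming weight in $\F_{q^m}^N$. Since $\Phi(X)=0$ forces every $\widehat{X}_i=0$ and hence every $X_i=0$, the map $\Phi$ is injective; applying the displayed inequality to $X-Y$ together with linearity of $\Phi$ shows that $\Phi(\mC)$ has the same cardinality as $\mC$ and minimum Hamming distance at least $d$.

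It then remains only to invoke the classical $Q$-ary bounds for $\Phi(\mC)$ with $Q=q^m$: the Singleton bound gives $|\mC|\le Q^{N-d+1}$, the sphere-packing bound gives $|\mC|\le\floor{\frac{Q^N}{\sum_{s=0}^{\floor{(d-1)/2}}\binom{N}{s}(Q-1)^s}}$, and the Plotkin and Elias bounds yield the remaining two expressions once one sets $Q=q^m$; in particular $Q-1=q^m-1$ and $(Q-1)/Q=(q^m-1)/q^m$ reproduce the stated thresholds and denominators, and $V_w(\F_{q^m}^N)$ is the Hamming ball appearing in the Elias bound. There is no serious obstacle in this argument; the only points that require a little care are the \emph{direction} of the rank inequality (the rank of a matrix is bounded \emph{above}, not below, by its number of nonzero rows) and the fact that the row-wise identification $\F_q^{m}\cong\F_{q^m}$ need only preserve which coordinates are zero, so no compatibility between the fields $\F_{q^{m_i}}$ and $\F_{q^m}$ is required.
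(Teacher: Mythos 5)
Your argument is correct and is essentially the standard proof of this result: the paper itself offers no proof, only the citation to Byrne--Gluesing-Luerssen--Ravagnani, and their Theorem III.1 is established by exactly this construction (pad each block to $n_i\times m$, read rows as elements of $\F_{q^m}$, and use $\rk(X_i)\le\#\{\text{nonzero rows}\}$ to get an injective, Hamming-weight-dominating embedding into $\F_{q^m}^N$, then invoke the classical $q^m$-ary bounds). You correctly handle the two delicate points, namely the direction of the rank inequality and the fact that the classical bounds are monotone in $d$ so that minimum distance at least $d$ suffices.
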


The other four bounds presented in~\cite{byrne2021fundamental} are not induced by the Hamming-metric case and are specific to the sum-rank metric.

\begin{theorem}[see \text{\cite[Theorems III.2, III.6--III.8]{byrne2021fundamental}}]\label{thm:non-induced} Let $\mC\subseteq\spac$ be a sum-rank metric code with $|\mC|\geq2$ and $\srk(\mC)\ge d$. Let $j$ and $\delta$ be the unique integers satisfying $d-1=\sum_{i=1}^{j-1} n_i + \delta$ and $0\leq\delta\leq n_j-1$. Let $\ell\leq t-1$ and $\delta'\leq n_{\ell+1}-1$ be the unique positive integers such that $\smash{d-3=\sum_{j=1}^\ell n_j+\delta'}$.
Define $\smash{\bfn'=(n_{\ell+1}-\delta',n_{\ell+2},\dots,n_t)}$ and $\bfm'=(m_{\ell+1},m_{\ell+2},\dots,m_t)$. Finally, let $Q=\sum_{i=1}^t q^{-m_i}$.
The following hold:
\begin{center}
    \begin{tabular}{ll}
     \textbf{Singleton bound:} & $|\mC|\leq q^{\sum_{i=j}^t m_in_i-m_j\delta}$, \\[0.5cm]
     \textbf{Sphere-Packing bound:} & $|\mC|\leq\floor{\frac{|\spac|}{V_r(\spac)}}$, where $r=\floor{(d-1)/2}$, \\[0.5cm]
     \textbf{Projective Sphere-Packing bound:} & $|\mC|\leq\floor{\frac{|\F^{\bfn'\times\bfm'}_q|}{V_1(\F^{\bfn'\times\bfm'}_q)}}$ if $3\leq d\leq N$, \\[0.5cm]
     \textbf{Total Distance bound:} & $|\mC|\leq\frac{d-N+t}{d-N+Q}$ if $d>N-Q$. \\[0.5cm]
\end{tabular}
\end{center}
In the Sphere-Packing and Projective Sphere-Packing bounds, the denominator
denotes the cardinality of any ball in the sum-rank metric of radius $r$. For example,
$$V_r(\spac)=|\{(X_1,\dots,X_t)\in\spac \mid \srk(X_1,\dots,X_t)\leq r\}|,$$
and a closed formula is provided in~\cite{byrne2021fundamental}.
\end{theorem}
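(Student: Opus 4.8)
The plan is to derive each of the four bounds from a classical coding-theoretic device, transferred to the sum-rank metric using two structural facts valid for every $\spac$: the sum-rank distance is invariant under translation by the additive group, and it obeys the triangle inequality (it is the graph distance of $\Gamma(\spac)$, as observed after Definition~\ref{def:srkgraph}). For the \emph{Singleton bound} I would puncture. Given a choice of blocks to discard together with a number of rows to discard from one further block, let $\pi$ be the corresponding coordinate projection of $\spac$. Since deleting one row of a matrix lowers its rank by at most one and deleting an entire $n_i\times m_i$ block lowers the sum-rank by at most $n_i=\min(n_i,m_i)$, the drop $\srk(X)-\srk(\pi X)$ is at most the total number of deleted rows, a full block $i$ counting as $n_i$ rows. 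Taking $\pi$ to delete blocks $1,\dots,j-1$ and $\delta$ rows of block $j$, this drop equals $d-1$, so distinct codewords satisfy $\srk(\pi X-\pi Y)\ge\srk(X-Y)-(d-1)\ge 1$; hence $\pi|_{\mC}$ is injective and $|\mC|$ is at most $|\F_q^{(n_j-\delta)\times m_j}\times\F_q^{n_{j+1}\times m_{j+1}}\times\cdots\times\F_q^{n_t\times m_t}|=q^{\sum_{i=j}^t m_in_i-m_j\delta}$.

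The \emph{Sphere-Packing bound} is the volume argument: with $r=\floor{(d-1)/2}$ one has $2r<d$, so by the triangle inequality the sum-rank balls of radius $r$ about the codewords are pairwise disjoint, and by translation-invariance each has exactly $V_r(\spac)$ points, whence $|\mC|\,V_r(\spac)\le|\spac|$. For the \emph{Projective Sphere-Packing bound} I would combine this with puncturing: choosing $\pi$ to delete blocks $1,\dots,\ell$ together with $\delta'$ rows of block $\ell+1$ lowers the sum-rank by at most $d-3$, so the image $\mC'=\pi(\mC)\subseteq\F^{\bfn'\times\bfm'}_q$ has $|\mC'|=|\mC|$ and minimum sum-rank distance at least $3$; applying the Sphere-Packing bound with $r=1$ to $\mC'$ gives the estimate. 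The hypotheses $3\le d\le N$ are exactly what make this projection well defined.

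For the \emph{Total Distance bound} I would double count $T:=\sum_{X,Y\in\mC}\srk(X-Y)$. Every ordered pair of distinct codewords contributes at least $d$, so $T\ge(|\mC|^2-|\mC|)d$. On the other hand $T=\sum_{i=1}^t\sum_{X,Y\in\mC}\rk(X_i-Y_i)$, and for each block $i$ the aim is to prove the Plotkin-type estimate
\[
\sum_{X,Y\in\mC}\rk(X_i-Y_i)\ \le\ |\mC|^2(n_i-q^{-m_i})-|\mC|(n_i-1).
\]
If $|\mC|\le q^{m_i}$, this already follows from $\rk(X_i-Y_i)\le n_i$ together with $\sum_S a_S^2\ge|\mC|$, where $a_S=|\{X\in\mC:X_i=S\}|$; if $|\mC|\ge q^{m_i}$, one instead bounds $\rk(X_i-Y_i)$ by the number of rows in which $X_i$ and $Y_i$ differ and applies Cauchy--Schwarz to each row after grouping the codewords by the value of that row. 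Summing over $i$ gives $T\le|\mC|^2(N-Q)-|\mC|(N-t)$; combining with the lower bound, dividing by $|\mC|$ and rearranging yields $(d-N+Q)|\mC|\le d-N+t$, which is the assertion once $d>N-Q$ makes the coefficient of $|\mC|$ positive.

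The step I expect to be the main obstacle is this per-block inequality for the Total Distance bound. The naive equidistribution estimate $\sum_{X,Y\in\mC}\rk(X_i-Y_i)\le|\mC|^2(n_i-q^{-m_i})$, modelled on the Hamming Plotkin bound, is not sharp enough; producing the extra $-|\mC|(n_i-1)$ term, equivalently the numerator $d-N+t$ rather than $d$, is what forces the case distinction between $|\mC|\le q^{m_i}$ and $|\mC|\ge q^{m_i}$. The puncturing and volume arguments, once the two structural facts about $\spac$ are isolated, I expect to be routine.
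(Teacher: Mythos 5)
The paper does not actually prove this theorem: it is imported verbatim from \cite{byrne2021fundamental} (Theorems III.2, III.6--III.8), so there is no in-paper argument to compare against, only the cited source. Your reconstruction is correct and follows essentially the same routes as that source — puncturing for the Singleton and Projective Sphere-Packing bounds, the disjoint-balls volume count for Sphere-Packing, and double counting with a per-block Plotkin-type estimate for the Total Distance bound — and in particular your case split at $|\mC|=q^{m_i}$ does close the per-block inequality in both regimes (the row-wise Cauchy--Schwarz bound $n_iM^2(1-q^{-m_i})$ suffices exactly when $|\mC|\geq q^{m_i}$, and the crude bound $n_i(M^2-M)$ exactly when $|\mC|\leq q^{m_i}$).
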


%%%%%%%%%%%%%%%%%%%%%%%%%%%%%%%%%
\subsection{Eigenvalue bounds}
%%%%%%%%%%%%%%%%%%%%%%%%%%%%%%%%%
In \cite[Corollary 16]{AAA2024} it was shown that any upper bound on the $k$-independence number of a graph yields an upper bound on the size of a sum-rank metric code with minimum distance~\mbox{$k+1$}, and vice versa. The next result provides the main bound we will be using to compare our new LP bound with; the so-called Ratio-type LP bound, which was recently proposed to be used for sum-rank metric codes in \cite{AAA2024}.

From here on, let $\mathbb{R}_k[x]$ denote the set of polynomials in variable $x$ with coefficients in $\mathbb{R}$ of degree at most $k$.

\begin{theorem}[Ratio-type bound; see \cite{ACF2019}]\label{thm:hoffman-like} Let $\Gamma=(\Omega,E)$ be a regular graph with $n$ vertices and adjacency matrix $A$ with eigenvalues \mbox{$\lambda_1\geq\dots\geq\lambda_n$.} Let \mbox{$p\in\mathbb{R}_k[x]$.} Define \mbox{$W(p)=\max_{u\in \Omega}\{(p(A))_{uu}\}$} and \mbox{$\lambda(p)=\min_{i=2,\dots,n}\{p(\lambda_i)\}$.} Then
\begin{equation}\label{eq:hoffman-like}
    \alpha_k(\Gamma) \leq n \, \frac{W(p)-\lambda(p)}{p(\lambda_1)-\lambda(p)}.
\end{equation}
\end{theorem}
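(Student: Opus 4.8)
The plan is to apply the standard ratio-type (Hoffman-flavoured) argument to the matrix $p(A)$ together with the characteristic vector of a $k$-independent set. Recall that the $k$-independence number $\alpha_k(\Gamma)$ is the largest size of a set of vertices that are pairwise at geodesic distance at least $k+1$. Fix such a set $S \subseteq \Omega$ of maximum size, and let $\chi = \chi_S \in \mathbb{R}^\Omega$ be its characteristic vector, so $\|\chi\|^2 = |S| = \alpha_k(\Gamma)$. The first and only structural input is that, writing $p(x) = \sum_{j=0}^k a_j x^j$, the matrix $p(A) = \sum_{j=0}^k a_j A^j$ has $(p(A))_{uv} = 0$ whenever $\mathrm{dist}(u,v) > k$: indeed $(A^j)_{uv}$ counts walks of length $j$ from $u$ to $v$, hence vanishes when $\mathrm{dist}(u,v) > j$, and $p$ has degree at most $k$. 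Consequently $(p(A))_{uv} = 0$ for all distinct $u, v \in S$, and therefore
\[
\chi^\top p(A)\, \chi = \sum_{u, v \in S} (p(A))_{uv} = \sum_{u \in S} (p(A))_{uu} \le |S|\, W(p).
\]

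Next I would bound the same quantity from below spectrally. Since $\Gamma$ is regular (and, in the case of interest, connected), the all-ones vector is an eigenvector of $A$ for $\lambda_1$; extend $v_1 = \mathbf{1}/\sqrt{n}$ to an orthonormal eigenbasis $v_1,\dots,v_n$ with $A v_i = \lambda_i v_i$, and write $\chi = \sum_{i=1}^n c_i v_i$. Then $c_1 = \langle \chi, v_1\rangle = |S|/\sqrt{n}$ and $\sum_{i=1}^n c_i^2 = \|\chi\|^2 = |S|$, so using $p(\lambda_i) \ge \lambda(p)$ for every $i \ge 2$,
\[
\chi^\top p(A)\, \chi = \sum_{i=1}^n c_i^2\, p(\lambda_i) \;\ge\; c_1^2\, p(\lambda_1) + \lambda(p)\Bigl(|S| - c_1^2\Bigr) \;=\; \frac{|S|^2}{n}\, p(\lambda_1) + \lambda(p)\Bigl(|S| - \frac{|S|^2}{n}\Bigr).
\]

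Combining the two estimates, dividing by $|S| > 0$, and using $c_1^2/|S| = |S|/n$ gives $W(p) \ge \lambda(p) + \tfrac{|S|}{n}\bigl(p(\lambda_1) - \lambda(p)\bigr)$; rearranging (and assuming $p(\lambda_1) > \lambda(p)$, since otherwise the right-hand side of \eqref{eq:hoffman-like} is $+\infty$ or the bound is vacuous) yields $|S| \le n\,\frac{W(p) - \lambda(p)}{p(\lambda_1) - \lambda(p)}$, which is \eqref{eq:hoffman-like}. I do not expect a serious obstacle here; the result is a one-vector application of the variational description of eigenvalues. The only points requiring care are genuinely bookkeeping: the vanishing of the off-diagonal entries of $p(A)$ indexed by $S$ (this is exactly where $\deg p \le k$ enters), the identification of $\mathbf{1}$ as a $\lambda_1$-eigenvector via regularity, and keeping track of the sign of $p(\lambda_1) - \lambda(p)$ so that the final division is in the right direction.
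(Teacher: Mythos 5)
Your proof is correct. Both key steps hold up: the vanishing of the off-diagonal entries $(p(A))_{uv}$ for distinct $u,v$ in a $k$-independent set follows exactly as you say from $\deg p\le k$ and the walk-counting interpretation of $(A^j)_{uv}$, and the spectral lower bound on $\chi^\top p(A)\chi$ only uses $p(\lambda_i)\ge\lambda(p)$ for $i\ge 2$ together with $c_1^2=|S|^2/n$, which needs nothing beyond regularity (the argument survives even if $\lambda_1$ has multiplicity greater than one, since then $p(\lambda_1)\le\lambda(p)$ forces the bound to be vacuous anyway, as you note). However, your route differs from the one in \cite{ACF2019} that the paper reproduces inside the proof of Lemma~\ref{l:rtbound-eq}: there the bound is obtained by eigenvalue interlacing applied to the $2\times 2$ quotient matrix of the partition $\{U,\Omega\setminus U\}$ with respect to $p(A)$, setting $\mu_1=p(\lambda_1)$ and using $\lambda(p)\le\mu_2$, whereas you run the one-vector Rayleigh-quotient computation with the characteristic vector $\chi_S$ alone. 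The two are closely related (the quotient-matrix argument is the variational principle on the two-dimensional space spanned by $\chi_U$ and $\chi_{\Omega\setminus U}$), and yours is the more elementary and self-contained of the two; what the interlacing formulation buys, and the reason the paper quotes it, is the equality analysis: tight interlacing immediately forces the partition to be equitable, which is exactly what Lemma~\ref{l:rtbound-eq}(ii)--(iii) extracts. With your approach one would instead have to trace equality through the two inequalities $\sum_{u\in S}(p(A))_{uu}\le|S|W(p)$ and $\sum_{i\ge2}c_i^2\,p(\lambda_i)\ge\lambda(p)\sum_{i\ge2}c_i^2$, which yields weaker structural information without extra work.
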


In cases $k=2$ and $k=3$, the best polynomial $p\in\R_k[x]$ that minimizes the bound in Theorem~\ref{thm:hoffman-like} was shown in~\cite{ACF2019} and \cite{KN2022}, respectively.
\begin{theorem}[Ratio-Type bound, $k=2$; see~\cite{ACF2019}]\label{thm:hoffman-k=2} 
Let $\Gamma$ be a regular graph with $n$ vertices and $r\geq 2$
\emph{distinct} eigenvalues \mbox{$\theta_0>\theta_1>\dots>\theta_r$}
of the adjacency matrix. Let $\theta_i$ be the largest eigenvalue such that $\theta_i\leq -1$. Then $$\alpha_2(\Gamma)\leq n\frac{\theta_0+\theta_i\theta_{i-1}}{(\theta_0-\theta_i)(\theta_0-\theta_{i-1})}.$$ Moreover, this is the best possible bound that can be obtained by choosing a polynomial via Theorem~\ref{thm:hoffman-like}.
\end{theorem}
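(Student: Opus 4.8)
I would prove the inequality by applying Theorem~\ref{thm:hoffman-like} to a single well‑chosen polynomial, and then establish optimality by analysing the resulting bound as a function of the polynomial.

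\textbf{The inequality.} First I would record that $i\ge 2$, i.e.\ $\theta_1>-1$: a regular graph with $\lambda_2\le -1$ is connected (otherwise the common degree $\theta_0$ has multiplicity $\ge 2$, forcing $\lambda_2=\theta_0>-1$), and a connected $\theta_0$‑regular graph all of whose eigenvalues other than $\theta_0$ are $\le -1$ must, since the eigenvalues sum to $0$, be the clique $K_{\theta_0+1}$, whence $r=1$, contradicting $r\ge 2$. Also $r\ge 2$ forces $\Gamma$ to have an edge, so $\theta_0\ge 1$ and, by Cauchy interlacing with a $K_2$‑subgraph, $\theta_r\le -1$; thus $\theta_i$ and $\theta_{i-1}$ (with $-1<\theta_{i-1}<\theta_0$) are well defined. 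I would then apply Theorem~\ref{thm:hoffman-like} with $k=2$ and $p(x)=(x-\theta_i)(x-\theta_{i-1})$. Since $\Gamma$ is $\theta_0$‑regular, $(A)_{uu}=0$ and $(A^2)_{uu}=\theta_0$ for every vertex $u$, so $(p(A))_{uu}=\theta_0+\theta_i\theta_{i-1}$ for all $u$ and $W(p)=\theta_0+\theta_i\theta_{i-1}$. Because $\theta_i$ and $\theta_{i-1}$ are consecutive eigenvalues, no eigenvalue lies strictly between them, so the upward parabola $p$ is nonnegative on the spectrum and vanishes exactly at $\theta_i,\theta_{i-1}$; hence $\lambda(p)=0$ and $p(\theta_0)=(\theta_0-\theta_i)(\theta_0-\theta_{i-1})>0$. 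Substituting into~\eqref{eq:hoffman-like} gives the stated bound.

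\textbf{Optimality: reductions.} For the ``moreover'' part I would take an arbitrary admissible $p\in\mathbb{R}_2[x]$ — meaning $p(\theta_0)>\lambda(p)$, without which the right‑hand side of~\eqref{eq:hoffman-like} is not meaningful — and show that its bound is no smaller. The bound is invariant under $p\mapsto\alpha p+\beta$ with $\alpha>0$, because $W$, $\lambda(\cdot)$ and evaluation at $\theta_0$ all scale and shift in step; using this I would normalise $p$ to be monic (the zero‑leading‑coefficient case returns the Hoffman ratio bound $n\,\theta_r/(\theta_r-\theta_0)$, which one checks — by cross‑multiplying and using $\theta_r\le -1<\theta_{i-1}$ and $\theta_i\le -1$ — is at least the asserted value; a negative leading coefficient requires a parallel analysis, in which $p$ is concave and $\lambda(p)$ is attained at $\theta_1$ or $\theta_r$), and then, subtracting $\lambda(p)$, assume $\min_{1\le j\le r}p(\theta_j)=0$. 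Such a monic quadratic has real roots $\rho_1\le\rho_2$ with $(\rho_1,\rho_2)$ free of non‑principal eigenvalues, with at least one of $\rho_1,\rho_2$ a non‑principal eigenvalue, and with $\rho_1,\rho_2<\theta_0$ (from $p(\theta_0)>0$); moreover the bound equals $n\,h(\rho_1,\rho_2)$, where
\[
h(s,t)=\frac{\theta_0+st}{(\theta_0-s)(\theta_0-t)}.
\]
So it remains to minimise $h$ over this feasible set and check that the minimum is $h(\theta_i,\theta_{i-1})$.

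\textbf{Optimality: the monotonicity engine.} A direct computation gives
\[
\frac{\partial h}{\partial s}=\frac{\theta_0\,(t+1)}{(\theta_0-s)^2(\theta_0-t)},\qquad
\frac{\partial h}{\partial t}=\frac{\theta_0\,(s+1)}{(\theta_0-t)^2(\theta_0-s)},
\]
so on the feasible set (where $\theta_0>0$ and $s,t<\theta_0$) $\partial h/\partial s$ has the sign of $t+1$ and $\partial h/\partial t$ has the sign of $s+1$. I would use this to argue that from any feasible pair one can reach $(\theta_i,\theta_{i-1})$ by a sequence of moves that never increase $h$ and keep $(\rho_1,\rho_2)$ free of non‑principal eigenvalues (hence keep $\lambda(p)=0$): first slide the root on the ``wrong'' side of $-1$ toward the gap between $\theta_i$ and $\theta_{i-1}$ (recall $\theta_i\le -1<\theta_{i-1}$), reducing to a pair of consecutive eigenvalues; then walk along consecutive eigenvalue pairs, $h$ decreasing as one moves upward while the larger member of the pair is $\le -1$ and as one moves downward while the smaller member is $\ge -1$, which terminates precisely at $(\theta_i,\theta_{i-1})$. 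This yields $h(\rho_1,\rho_2)\ge h(\theta_i,\theta_{i-1})$, proving optimality.

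\textbf{What I expect to be the main obstacle.} The inequality half is routine once one notices the constancy of $(p(A))_{uu}$ and the consecutiveness of $\theta_i,\theta_{i-1}$. The delicate part is the optimality half: the feasible set of root pairs is a union of line segments, and I would need to treat several sub‑cases carefully — which of the two roots is a non‑principal eigenvalue, whether each root sits above or below $-1$, double roots, and the preliminary reduction away from nonpositive leading coefficients — verifying in each that the admissible moves preserve feasibility (no non‑principal eigenvalue enters the open interval between the roots, so $\lambda(p)$ stays $0$) and do not increase $h$, and that the final walk along consecutive eigenvalue pairs is monotone down to $(\theta_i,\theta_{i-1})$. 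The sign rules for $\partial h/\partial s$ and $\partial h/\partial t$ are the clean mechanism that makes all of this bookkeeping go through.
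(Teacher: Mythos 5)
This theorem is quoted from \cite{ACF2019}; the paper you are reading does not reprove it, so there is no internal proof to compare against. Judged on its own terms, your first half is complete and is exactly the standard argument: take $p(x)=(x-\theta_i)(x-\theta_{i-1})$, use regularity to get $(p(A))_{uu}=\theta_0+\theta_i\theta_{i-1}$ for every $u$, use the consecutiveness of $\theta_i,\theta_{i-1}$ to get $\lambda(p)=0$, and substitute into Eq.\:\eqref{eq:hoffman-like}. Your preliminary observation that $i\ge 2$ (so that $\theta_{i-1}$ is a genuine non-principal eigenvalue and the denominator is nonzero) is also needed and essentially correct, though the cleanest phrasing is: every connected regular non-complete graph has a non-principal eigenvalue exceeding $-1$ by the trace argument, and if all components are complete then $r=1$.

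The optimality half identifies the right mechanism: after normalizing to monic quadratics with $\lambda(p)=0$, the bound is $n\,h(\rho_1,\rho_2)$ on a one-dimensional feasible set of root pairs, $h$ is monotone on each segment because $\partial h/\partial s$ has the sign of $t+1$ (your formula for the partials is correct), and the walk through the double roots $(\theta_j,\theta_j)$ shows $h(P_j)\ge h(P_{j-1})$ for $j\ge i$ and $h(P_{j-1})\ge h(P_j)$ for $j\le i-1$, terminating at $(\theta_i,\theta_{i-1})$. Two sub-cases, however, are only gestured at rather than proved. First, the nonpositive-leading-coefficient case: concave quadratics do require their own short argument (one can check that after normalization their bound degenerates toward the Hoffman ratio or is not admissible at all, as in your implicit reduction), and you do not carry it out. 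Second, the claimed domination of the linear/Hoffman bound is asserted ``by cross-multiplying''; in fact it follows more cleanly inside your own framework, since $\lim_{s\to-\infty}h(s,\theta_r)=-\theta_r/(\theta_0-\theta_r)$ and $h$ is monotone on the ray $t=\theta_r$, $s\le\theta_r$, with minimum at the double root $(\theta_r,\theta_r)$, which your walk already dominates. With those two sub-cases written out, the argument is a complete and correct proof of the cited result.
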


\begin{theorem}[Ratio-Type bound, $k=3$; see~\cite{KN2022}] \label{thm:hoffman-k=3} Let $\Gamma$ be a regular graph with 
the vertex set $\Omega$, $|\Omega|=n$, and 
with $r\geq 3$ \emph{distinct} eigenvalues \mbox{$\theta_0>\theta_1>\dots>\theta_r$}
of the adjacency matrix. 
Let $s$ be the largest index such that \mbox{$\smash{\theta_s\geq -\frac{\theta_0^2+\theta_0\theta_r-\Delta}{\theta_0(\theta_r+1)}}$,} where \mbox{$\Delta=\max_{u\in \Omega}\{(A^3)_{uu}\}$.} Then
$$\alpha_3(\Gamma)\leq n\frac{\Delta-\theta_0(\theta_s+\theta_{s+1}+\theta_r)-\theta_s\theta_{s+1}\theta_r}{(\theta_0-\theta_s)(\theta_0-\theta_{s+1})(\theta_0-\theta_r)}.$$ Moreover, this is the best possible bound that can be obtained by choosing a polynomial via Theorem~\ref{thm:hoffman-like}.
\end{theorem}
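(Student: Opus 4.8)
The statement to prove is Theorem~\ref{thm:hoffman-k=3}, the Ratio-Type bound for $k=3$. The plan is to specialize Theorem~\ref{thm:hoffman-like} to $k=3$ and then optimize over the choice of cubic polynomial $p\in\R_3[x]$. First I would parametrize the relevant polynomials: since only the eigenvalues $\theta_0>\theta_1>\dots>\theta_r$ of the regular graph $\Gamma$ matter, and since the bound~\eqref{eq:hoffman-like} is invariant under positive scaling of $p$ and unaffected by adding a constant in a controlled way, I would look for the optimal $p$ among cubics of the form $p(x)=(x-a)(x-b)(x-c)$ (possibly shifted by a constant), where the roots are to be chosen. The key structural observation, exactly as in the $k=2$ case of Theorem~\ref{thm:hoffman-k=2}, is that for the bound to be as small as possible one wants $p$ to take a small (ideally minimal) value on all of $\theta_1,\dots,\theta_r$ while $p(\theta_0)$ stays large; this forces the optimal roots to interlace the spectrum in a specific way, and $\lambda(p)=\min_{i\geq1}p(\theta_i)$ is then attained at one of the extreme eigenvalues or at a root-adjacent eigenvalue.

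The main steps, in order, would be: (1) Reduce to cubics $p$ with $p(\theta_0)>0$ and argue by a convexity/sign-pattern analysis of a cubic that the minimum of $p$ over $\{\theta_1,\dots,\theta_r\}$, for an optimal $p$, is controlled by the value at $\theta_r$ (the most negative eigenvalue) together with the behavior near a threshold index $s$. (2) Show that the optimal cubic has two of its three roots equal to consecutive eigenvalues $\theta_s,\theta_{s+1}$ and a third root at $\theta_r$ — this is the cubic analogue of the $k=2$ choice $p(x)=(x-\theta_i)(x-\theta_{i-1})$; the index $s$ is pinned down by the requirement that this configuration actually yields $\lambda(p)$ equal to the common value $p(\theta_s)=p(\theta_{s+1})=p(\theta_r)=0$ rather than some $\theta_j$ with $j$ between $s+1$ and $r$ giving a more negative value. (3) Compute $W(p)$: since $\Gamma$ is regular and $p(A)=A^3+(\text{lower order})$, we have $(p(A))_{uu}=(A^3)_{uu}+c_1(A^2)_{uu}+c_2(A)_{uu}+c_0$, and the diagonal entries of $A^2$ (degree) and $A$ (zero) are constant; hence $W(p)=\Delta+(\text{constant terms determined by the roots})$, where $\Delta=\max_u(A^3)_{uu}$. (4) Plug $W(p)$, $\lambda(p)=0$, and $p(\theta_0)=(\theta_0-\theta_s)(\theta_0-\theta_{s+1})(\theta_0-\theta_r)$ into~\eqref{eq:hoffman-like} and simplify, after expanding $(x-\theta_s)(x-\theta_{s+1})(x-\theta_r)$, to recover the stated expression
$$
\alpha_3(\Gamma)\leq n\,\frac{\Delta-\theta_0(\theta_s+\theta_{s+1}+\theta_r)-\theta_s\theta_{s+1}\theta_r}{(\theta_0-\theta_s)(\theta_0-\theta_{s+1})(\theta_0-\theta_r)}.
$$
(5) Derive the threshold condition $\theta_s\geq -\frac{\theta_0^2+\theta_0\theta_r-\Delta}{\theta_0(\theta_r+1)}$ as precisely the inequality ensuring that the chosen configuration is feasible and optimal — i.e., that no eigenvalue strictly between $\theta_{s+1}$ and $\theta_r$ pushes $\lambda(p)$ below $0$, and that shifting $s$ by one does not improve the bound; taking $s$ to be the \emph{largest} such index gives the tightest bound. (6) Finally, the "moreover" (optimality) part: argue that over all $p\in\R_3[x]$ this particular cubic minimizes the right-hand side of~\eqref{eq:hoffman-like}, by showing any competing cubic either has a strictly smaller $p(\theta_0)/W(p)$ ratio or fails to achieve $\lambda(p)$ as large relative to the other quantities; this is a case analysis on the root locations of a general cubic relative to the spectrum.

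The hard part will be step (5) together with the optimality claim in step (6): pinning down exactly which index $s$ is correct and proving that the resulting cubic is globally optimal among all degree-$\leq 3$ polynomials requires a careful case analysis of how a cubic can interlace the eigenvalues, and of how $W(p)$ (which depends on $\Delta=\max_u(A^3)_{uu}$, not just on the spectrum) trades off against $p(\theta_0)$ and $\lambda(p)$. Unlike the $k=2$ case, where $W(p)$ depends only on the degree and the graph's regularity, here the genuinely local quantity $\Delta$ enters, so the optimization is not purely spectral and one must verify that the extremal $p$ does not "waste" anything in its quadratic coefficient. I would handle this by writing a general optimal candidate as $p(x)=(x-a)(x-b)(x-c)$, treating $W(p)-\lambda(p)$ and $p(\theta_0)-\lambda(p)$ as functions of $(a,b,c)$, and using monotonicity in each variable (holding the others fixed) to push the roots onto eigenvalues, exactly mirroring — but with one extra degree of freedom compared to — the argument in~\cite{ACF2019} for Theorem~\ref{thm:hoffman-k=2}. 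The remaining computations in steps (3) and (4) are routine polynomial algebra and I would not belabor them.
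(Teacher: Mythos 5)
This theorem is stated in the paper as a quoted result from~\cite{KN2022}; the paper gives no proof of it, so there is nothing internal to compare your argument against. Judged on its own, your outline follows exactly the strategy of the cited reference: reduce to a monic cubic with $\lambda(p)=0$, take $p(x)=(x-\theta_s)(x-\theta_{s+1})(x-\theta_r)$, and use regularity to get $(p(A))_{uu}=(A^3)_{uu}-\theta_0(\theta_s+\theta_{s+1}+\theta_r)-\theta_s\theta_{s+1}\theta_r$ (since $(A^2)_{uu}=\theta_0$ and $(A)_{uu}=0$), whence $W(p)=\Delta-\theta_0(\theta_s+\theta_{s+1}+\theta_r)-\theta_s\theta_{s+1}\theta_r$ and $\lambda(p)=0$ because the only sign changes of $p$ on $(-\infty,\theta_1]$ occur between consecutive eigenvalues or below $\theta_r$. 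Steps (1)--(4) are therefore correct and do reproduce the stated formula, and your observation that the bound is invariant under positive scaling and additive shifts of $p$ is a legitimate normalization.

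The gap is where you say it is: steps (5)--(6). Determining $s$ and proving global optimality over all of $\R_3[x]$ is the actual content of~\cite{KN2022} (the main text of this paper explicitly remarks that ``an entire paper is devoted to it''), and your proposal only gestures at it. To be concrete about what is missing: writing the candidate bound as a function $f(a,b)$ of the two free roots $a=\theta_s$, $b=\theta_{s+1}$ (with the third root fixed at $\theta_r$), one computes that the sign of $\partial f/\partial b$ equals the sign of $\Delta-\theta_0^2-\theta_0\theta_r-\theta_0 a(\theta_r+1)$, which (for $\theta_r<-1$) is nonnegative precisely when $a\geq -\frac{\theta_0^2+\theta_0\theta_r-\Delta}{\theta_0(\theta_r+1)}$ --- this is where the threshold in the statement comes from, and it must then be combined with a discrete comparison of consecutive indices and with an argument excluding cubics whose roots do not sit at eigenvalues (including those with negative leading coefficient, and those where $\lambda(p)$ is attained at an eigenvalue other than a root). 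None of this is carried out in your proposal, so as written it establishes the bound for the \emph{particular} polynomial $(x-\theta_s)(x-\theta_{s+1})(x-\theta_r)$ but not the choice of $s$ nor the ``best possible'' claim.
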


The following lemma is a new contribution to the mentioned eigenvalue bound; it provides conditions under which the Ratio-type bound of Theorem~\ref{thm:hoffman-like} is tight.

\begin{lemma}\label{l:rtbound-eq}
Let $\Gamma$, $W(p)$, $\lambda(p)$ be as in Theorem~\ref{thm:hoffman-like}. If the Ratio-type bound of Theorem~\ref{thm:hoffman-like} is met by a $k$-independent set $U$ of the graph $\Gamma$, then the following conditions hold:
\begin{enumerate}[label=(\roman*)]
    \item $W(p) = \frac1{|U|} \sum\limits_{u\in U} (p(A))_{uu}$,\label{itm:Wp}
    \item Any vertex $u\in U$ has $W(p)$ neighbors in $U$; in particular, the induced subgraph on vertices of $U$ is regular,\label{itm:uinU}
    \item Any vertex $v\notin U$ has $W(p)-\lambda(p)$ neighbors in $U$.\label{itm:unotinU}
\end{enumerate}
\end{lemma}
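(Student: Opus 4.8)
The plan is to reopen the proof of Theorem~\ref{thm:hoffman-like} and locate the three inequalities it rests on; each of~\ref{itm:Wp}--\ref{itm:unotinU} will be the equality case of one of them. Since $\Gamma$ is $\lambda_1$-regular, fix an orthonormal basis $v_1=\tfrac{1}{\sqrt n}\mathbf 1,v_2,\dots,v_n$ of eigenvectors of $A$ with $Av_i=\lambda_i v_i$, and write the characteristic vector $\mathbf 1_U$ of $U$ as $\mathbf 1_U=\tfrac{|U|}{n}\mathbf 1+w$ with $w\perp\mathbf 1$, so that $w=\sum_{i\ge 2}c_i v_i$ and $\|w\|^2=|U|-|U|^2/n$. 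The structural fact that powers the whole argument is that $(p(A))_{uv}=0$ for all distinct $u,v\in U$: as $\deg p\le k$ and $U$ is $k$-independent, $d(u,v)>k$ forces $(A^j)_{uv}=0$ for every $j\le k$, while $(A^0)_{uv}=0$ as well.

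Two inequalities then follow. On one hand $\mathbf 1_U^{\top}p(A)\mathbf 1_U=\sum_{u\in U}(p(A))_{uu}\le |U|\,W(p)$, by the vanishing of the off-diagonal entries inside $U$. On the other hand, expanding in the eigenbasis,
\[
\mathbf 1_U^{\top}p(A)\mathbf 1_U=\frac{|U|^2}{n}\,p(\lambda_1)+\sum_{i\ge 2}c_i^2\,p(\lambda_i)\ \ge\ \frac{|U|^2}{n}\,p(\lambda_1)+\lambda(p)\Big(|U|-\frac{|U|^2}{n}\Big).
\]
Chaining the two and dividing by $|U|$ gives~\eqref{eq:hoffman-like}. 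Now assume $U$ attains the bound, i.e.\ $|U|=n\tfrac{W(p)-\lambda(p)}{p(\lambda_1)-\lambda(p)}$; then both displays are equalities. Equality in the first reads $\sum_{u\in U}(p(A))_{uu}=|U|\,W(p)$, which is~\ref{itm:Wp}; since each summand is at most $W(p)$, it moreover forces $(p(A))_{uu}=W(p)$ for every $u\in U$. Equality in the second forces $c_i=0$ whenever $p(\lambda_i)>\lambda(p)$, hence $w$ lies in the $\lambda(p)$-eigenspace of $p(A)$, that is, $p(A)w=\lambda(p)\,w$.

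It remains to extract~\ref{itm:uinU} and~\ref{itm:unotinU} from a single identity. Using $\mathbf 1_U=\tfrac{|U|}{n}\mathbf 1+w$, $p(A)\mathbf 1=p(\lambda_1)\mathbf 1$, $p(A)w=\lambda(p)w$, and the attained-bound relation $\tfrac{|U|}{n}\big(p(\lambda_1)-\lambda(p)\big)=W(p)-\lambda(p)$, one obtains
\[
p(A)\,\mathbf 1_U=\big(W(p)-\lambda(p)\big)\,\mathbf 1+\lambda(p)\,\mathbf 1_U .
\]
Reading off the $v$-th coordinate, the left side equals $\sum_{u\in U}(p(A))_{vu}$. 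For $v\in U$ this sum collapses to $(p(A))_{vv}$ by the off-diagonal vanishing inside $U$, and the identity gives $(p(A))_{vv}=W(p)$; this is~\ref{itm:uinU}, and as each $u\in U$ contributes the same constant local count $W(p)$, the induced subgraph on $U$ is regular. For $v\notin U$ the identity gives $\sum_{u\in U}(p(A))_{vu}=W(p)-\lambda(p)$, which is~\ref{itm:unotinU}.

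I expect the delicate step to be the equality analysis of the spectral inequality $\sum_{i\ge 2}c_i^2 p(\lambda_i)\ge\lambda(p)\|w\|^2$: one must be careful that $w$ involves only $v_2,\dots,v_n$ (automatic from $w\perp\mathbf 1$), that $\lambda(p)$ is the minimum of $p(\lambda_i)$ over exactly these indices — accounting for the possibility that $\lambda_1$ is a repeated eigenvalue, so that some $\lambda_i=\lambda_1$ with $i\ge 2$ — and that the conclusion is the clean eigen-equation $p(A)w=\lambda(p)w$ rather than merely a value of the quadratic form. Once that is secured, the master identity and conditions~\ref{itm:uinU}--\ref{itm:unotinU} drop out immediately.
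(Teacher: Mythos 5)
Your proof is correct, and it reaches the same three conclusions as the paper, but by a genuinely different route. The paper reruns the interlacing proof of \cite[Theorem 3.2]{ACF2019}: it forms the $2\times 2$ quotient matrix of $p(A)$ with respect to the partition $\{U,\Omega\setminus U\}$, chains $\lambda(p)\le\mu_2\le W(p)-\tfrac{r p(\lambda_1)-rW(p)}{n-r}$, and in the equality case invokes Haemers' result that tight interlacing forces the partition to be equitable, from which (ii) and (iii) are read off the quotient matrix entries. You instead give the classical quadratic-form (Rayleigh-quotient) proof of the ratio bound: decompose $\mathbf{1}_U=\tfrac{|U|}{n}\mathbf{1}+w$, sandwich $\mathbf{1}_U^{\top}p(A)\mathbf{1}_U$ between the spectral lower bound and the upper bound $|U|\,W(p)$ coming from the vanishing of $(p(A))_{uv}$ inside the $k$-independent set, and in the equality case extract the eigen-equation $p(A)w=\lambda(p)w$ and hence the identity $p(A)\mathbf{1}_U=(W(p)-\lambda(p))\mathbf{1}+\lambda(p)\mathbf{1}_U$, whose coordinates are exactly (ii) and (iii). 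What your version buys is self-containedness (the equitable-partition conclusion is derived rather than cited from \cite{Haemers95}) and the stronger pointwise consequence $(p(A))_{uu}=W(p)$ for every $u\in U$, which the paper's condition (i) only states on average; what it costs is nothing, since your equality analysis of $\sum_{i\ge2}c_i^2p(\lambda_i)\ge\lambda(p)\lVert w\rVert^2$ is handled correctly, including the case of a repeated largest eigenvalue. One presentational caveat applies to both proofs: the ``$W(p)$ neighbors in $U$'' of conditions (ii)--(iii) must be read as row sums of $p(A)$ over $U$ (the weighted adjacency of $p(A)$), exactly as your master identity and the paper's quotient matrix both make precise.
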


\begin{proof}
The proof follows directly from analyzing the inequality that arises as a part of the proof of~\cite[Theorem 3.2]{ACF2019}, which we now give.
Let $A$ denote the adjacency matrix of $\Gamma$ with adjacency eigenvalues $\lambda_1\geq \cdots \geq \lambda_n$. Let $\mu_1\geq \mu_2$ denote the eigenvalues of the quotient matrix of the partition
of $\Omega$ into $U$ and $\Omega\setminus U$:
$$\left(
\begin{matrix}
    \frac1r \sum\limits_{u\in U} (p(A))_{uu} & p(\lambda_1) - \frac1r \sum\limits_{u\in U} (p(A))_{uu} \\
    \frac{rp(\lambda_1) - \sum\limits_{u\in U} (p(A))_{uu}}{n-r} & p(\lambda_1) - \frac{rp(\lambda_1) - \sum\limits_{u\in U} (p(A))_{uu}}{n-r} \\
\end{matrix}
\right).$$

Finally, let $\mu_1 = p(\lambda_1)$, and $r = |U| = \alpha_k(\Gamma)$. Then,

\begin{align}
\label{eq:rtbound-proof1} \lambda(p) &\leq \mu_2 \\
\nonumber &= \frac1r \sum\limits_{u\in U} (p(A))_{uu} - \frac{rp(\lambda_1)-\sum\limits_{u\in U} (p(A))_{uu}}{n-r} \\
\label{eq:rtbound-proof3} &\leq W(p) - \frac{rp(\lambda_1)-rW(p)}{n-r}.
\end{align}
The explicit bound on $r$ is then deduced from the inequality between $\lambda(p)$ and the final expression.

In case the Ratio-type bound is tight, the inequalities\:\eqref{eq:rtbound-proof1} and\:\eqref{eq:rtbound-proof3} must be equalities. In particular, it means that $$W(p)=\frac1r \displaystyle\sum\limits_{u\in U} (p(A))_{uu},$$ which gives~\ref{itm:Wp}.
The inequality\:\eqref{eq:rtbound-proof1} being an equality means that the interlacing of eigenvalues is tight, and by \cite[Corollary 2.3]{Haemers95} it follows that the partition of $\Gamma$ into $U$ and $\Omega\setminus U$ is equitable (regular). It follows that each vertex from $\Omega\setminus U$ is adjacent to $$\frac{rp(\lambda_1)-\sum\limits_{u\in U} (p(A))_{uu}}{n-r} = \frac{rp(\lambda_1)-r W(p)}{n-r} = W(p) - \lambda(p)$$ vertices from $U$, which gives~\ref{itm:unotinU}, and each vertex from $U$ is adjacent to $$\frac1r \sum\limits_{u\in U} (p(A))_{uu} = W(p)$$ vertices from $U$, which gives~\ref{itm:uinU}.
\end{proof}

A graph $\Gamma$ is \textbf{$l$-partially walk-regular} if for any vertex $v$ and any positive integer $i\leq l$ the number of closed walks of length $i$ does not depend on the choice of $v$. We also say that $\Gamma$ is \textbf{partially walk-regular} if it is $l$-partially walk-regular for some~$l$, and that $\Gamma$ is \textbf{walk-regular} if it is $l$-partially walk-regular for any~$l$.

\begin{remark}
The condition~\ref{itm:Wp} of Lemma~\ref{l:rtbound-eq} always holds in a partially walk-regular graph, since all entries of the diagonal of $p(A)$ have the same value. In particular, this condition holds if $\Gamma$ is a sum-rank metric graph, see \cite[Proposition~11]{AAA2024}.
\end{remark}

For a fixed $k$, the challenge behind applying the Ratio-type bound is to find a polynomial of degree $k$ which minimizes the right-hand of Eq.\:\eqref{eq:hoffman-like}. This was resolved for $k\in\{2,3\}$ resulting in closed formul{\ae} presented in Theorems~\ref{thm:hoffman-k=2} and~\ref{thm:hoffman-k=3}. However, even in the case $k=3$ finding the polynomial in general turns out to be an involved problem, with an entire paper devoted to it~\cite{KN2022}.
In \cite{F2020}, an LP implementation using the so-called minor polynomials was proposed for finding, for a given $k$ and a $k$-partially walk-regular graph $\Gamma$, the polynomial $p$ that optimizes the Ratio-type bound from Theorem~\ref{thm:hoffman-like}. We will use such LP 
to compute the Ratio-type bound and compare it with the new Delsarte's LP bound. In the LP for the Ratio-type bound, the inputs are the distinct adjacency eigenvalues of the graph $\Gamma$, denoted \mbox{$\theta_0>\cdots>\theta_r$,} with respective multiplicities $m(\theta_i)$, \mbox{$i\in\{0,\dots,r\}$.} The \textbf{minor polynomial} \mbox{$f_k\in\mathbb{R}_k[x]$} is a
polynomial that minimizes \mbox{$\sum_{i=0}^r m(\theta_i) f_k(\theta_i)$.} Let $p=f_k$ be defined by \mbox{$f_k(\theta_0)=x_0=1$} and \mbox{$f_k(\theta_i)=x_i$} for \mbox{$i\in\{1,\dots,r\}$,} where the vector \mbox{$(x_1,\dots,x_r)$} is a solution of the following linear program: 
\begin{equation}\label{eq:FiolLP}
\boxed{
\begin{array}{ll@{}l}
\text{minimize}  &\sum_{i\in \db{r}} m(\theta_i)x_i &\\
\text{subject to} &f[\theta_0,\dots,\theta_s]=0, &\quad s=k+1,\dots,r,\\
&x_i\geq0, &\quad i\in[r].\\
\end{array}
}
\end{equation}
Here, \mbox{$f[\theta_0,\dots,\theta_m]$} denote $m$-th divided differences of Newton interpolation, recursively defined by $$f[\theta_i,\dots,\theta_j]=\frac{f[\theta_{i+1},\dots,\theta_j]-f[\theta_i,\dots,\theta_{j-1}]}{\theta_j-\theta_i},$$ where $j>i$, starting with \mbox{$f[\theta_i]=x_i$} for \mbox{$i\in\{0,\dots,r\}$.}
In~\cite{F2020}, it was shown that, for $k=3$, 
using the LP\:\eqref{eq:FiolLP}
it is possible to obtain tight bounds for every Hamming graph $H(r,2)$, and it was also shown that for these graphs 
the Ratio-type bound coincides with Delsarte's LP bound~\cite{DelsarteLP}; see also Section \ref{sec:final}.

%%%%%%%%%%%%%%%%% Association schemes %%%%%%%%%%%%%%%%%

%%%%%%%%%%%%%%%%%%%%%%%%%%%%%%%%%%
\subsection{Association schemes}\label{ssec:as}
%%%%%%%%%%%%%%%%%%%%%%%%%%%%%%%%%%
%Delsarte's linear program~\cite{DelsarteLP} provides a well-known upper bound on the maximal size of a code with given minimum distance, and it make use of association schemes and related Bose-Mesner algebra notions.

The basic theory of association schemes and their 
Bose-Mesner algebras, given in this subsection for 
the sake of completeness, are standard and can be found 
in more detail, \emph{e.g.}, in~\cite[Chapter 2]{BCNbookDRG}, 
\cite{DelsarteLP}, or \cite{Godsil}. 

%Let $\Omega$ be a finite set. 
%For a binary relation $R\subseteq\Omega\times\Omega$, 
%$R^*=\{(y,x) \colon (x,y)\in R\}$ denotes 
%the converse (transpose) of $R$. 
%Let $1_{\Omega}$ denote the diagonal 
%of $\Omega\times \Omega$, i.e., 
%the binary relation 
%$\{(\alpha,\alpha)\mid \alpha\in \Omega\}$.

\begin{definition}\label{def:as}
    A \emph{\textbf{(symmetric) $D$-class association scheme}} $\cX=(\Omega,\sR)$ is a finite set $\Omega$ (of points) 
    together with a collection 
    $\sR=\{R_i\mid i\in \db{D}\}$ 
    of non-empty binary relations on $\Omega$, satisfying the following
    four conditions:
    \begin{enumerate}[label=(\roman*)]
        \item $\sR$ is a partition of $\Omega\times \Omega$. %, that is, 
        %any ordered pair of points is 
        %in a unique relation $R_i$ for some $i\in I$.
        \item %Without loss of generality, 
        $R_0$ is the diagonal 
        of $\Omega\times \Omega$.
        %$1_{\Omega}$.
        \item Each binary relation $R_i$ equals its transpose (converse). % $R_i^*$.
        \item\label{itm:interray_as} For all $i,j,k\in \db{D}$ and $(\alpha,\beta)\in R_k$, the number of $\gamma\in \Omega$ 
        such that $(\alpha,\gamma)\in R_i$ and $(\gamma,\beta)\in R_j$ is a constant denoted by $p_{i,j}^k$ that does not depend on the choice of $(\alpha,\beta)$.
    \end{enumerate}
\end{definition}

Throughout this paper, we refer to a symmetric $D$-class association scheme simply as association scheme (informally, we will occasionally call 
an association scheme just a scheme). By the relations of~$\cX$ 
we mean the elements of $\sR$.
The number of relations $D+1$ and the constants $p_{i,j}^k$ 
are called the \textbf{rank} and the \textbf{intersection numbers} of a scheme, respectively. 

The following are two well-known examples 
of association schemes, which provide an 
algebraic framework to study codes in 
the Hamming and rank metrics.

\begin{definition}\label{def:hamming} Let $n$ and $q$ be positive integers. The \emph{\textbf{Hamming scheme}} is an association scheme $([n]^q,\sR)$ 
of rank $n+1$, where all pairs $x,y\in[n]^q$ with Hamming distance 
equal to $i$ are in the relation $R_i\in \sR$, $i\in \db{n}$.
\end{definition}

\begin{definition}\label{def:bf} Let $n,m$ be positive integers with $n\leq m$ 
and $q$ be a prime power. 
The \emph{\textbf{bilinear forms scheme}} 
is an association scheme $(\F_q^{n\times m},\sR)$ 
of rank $n+1$, where all pairs $A,B\in \F_q^{n\times m}$ 
with $\rk(A-B)=i$ are in the relation $R_i\in \sR$, 
$i\in \db{n}$.
\end{definition}

Let $\cX=(\Omega,\sR)$ be an association scheme, 
and $A_i\in \mathbb{R}^{\Omega\times \Omega}$ denote 
the adjacency matrix of $R_i\in \sR$.
Definition \ref{def:as}, which is stated in terms of 
binary relations on $\Omega$, 
can also be rewritten in terms of the adjacency matrices 
that they represent:
    \begin{enumerate}[label=(\roman*)]
        \item $\sum_{i\in I}A_i=J_{\Omega}$ 
        (the all-ones matrix).
        %where $J_n$ is the all-ones matrix of order $n\times n$.
        \item $A_0=I_{\Omega}$ (the identity matrix).  
        %where $I_n$ is the identity matrix of order $n$.
        \item $A_i=A_i^{\top}$ for all $i\in \db{D}$. 
        \item\label{itm:prod_adjprops} $A_i A_j = \sum\limits_{k\in \db{D}} p_{i,j}^k A_k$ for all $i,j\in \db{D}$.
    \end{enumerate}
By~\ref{itm:prod_adjprops} above, the matrices $A_i$ form 
a basis for a $(D+1)$-dimensional matrix algebra 
over $\mathbb{C}$, called 
the \textbf{Bose-Mesner algebra} of $\cX$.
This algebra is closed under matrix multiplication, 
Schur (entrywise) multiplication, and transposition, 
which thus makes it an example of 
a \textbf{coherent algebra}.
Since the matrices $A_i$, $i\in \db{D}$, pairwise 
commute, it follows that $\mathbb{R}^{\Omega}$ 
decomposes into the orthogonal direct sum 
of their common maximal eigenspaces.

The Bose-Mesner algebra of $\cX$ has 
two distinctive linear bases, 
namely, the one consisting of the adjacency matrices $A_i$ 
and another one of the so-called 
\textbf{primitive idempotents} $E_j$, which 
are the orthogonal projection matrices 
onto maximal common eigenspaces of the adjacency matrices.
In particular, $E_i E_j$ is the zero matrix 
if $i\neq j$ and $E_i^2=E_i$ 
for all $i,j\in \db{D}$.
For expressing each basis in terms of the other, 
define the constants $P_{ji}$
and $Q_{ij}$ as  
\[
A_i = \sum\limits_{j\in \db{D}} P_{ji} E_j
\quad \text{and}\quad 
E_i = \frac1{|\Omega|}\sum\limits_{j\in \db{D}} Q_{ij} A_j.
\]
Now, the corresponding change-of-basis matrices 
$P=(P_{ji})$ and $Q=(Q_{ij})$ of order $D+1$ 
are respectively called the \textbf{first and second eigenmatrices} (or character tables)
of the Bose-Mesner algebra (or of the scheme). 
In particular, the $i$-th column of $P$ consists of the eigenvalues of $A_i$, \textit{i.e.}, 
$P_{ji}$ is the eigenvalue of $A_i$ 
on the $j$-th maximal common eigenspace. 
Furthermore, note that $Q=|\Omega|\cdot P^{-1}$.

Next, we briefly 
(see \cite[Chapter~9]{BCNbookDRG}
for further details)
recall how to compute the $P$ and $Q$ matrices for the bilinear forms scheme from Definition \ref{def:bf} with $D=n$. 
First, observe that the adjacency matrix $A_1$ is 
the adjacency matrix of the bilinear forms graph $\Bil_q(n,m)$ 
and has $D+1$ distinct eigenvalues 
given by 
\[
\theta_j = \frac1{q-1} \left( (q^{n-j}-1)(q^m-q^j)- q^j + 1 \right),\quad j\in \db{D}.
\]
Second, as the bilinear forms scheme is 
$P$-polynomial (see \cite{BCNbookDRG,DelsarteLP}), 
for every $i\in \db{D}$, 
the adjacency matrix $A_i$ equals $p_i(A_1)$, 
where $p_i$ is a polynomial 
of degree $i$ defined by 
the following recursive equation:
\[
p_i = \frac1{c_i} \left( (x - a_{i-1}) p_{i-1} - b_{i-2} p_{i-2}\right),\quad i\in \db{D}, 
\]
where $p_0=1$, $p_1(x)=x$, and 
\begin{align*}
b_i &:= p_{1,i+1}^i = \frac{q^{2i}(q^{m-i}-1)(q^{n-i}-1)}{q-1}, \\
c_i &:= p_{1,i-1}^i = \frac{q^{i-1}(q^i-1)}{q-1}, 
\\
a_i &:= p_{1,i}^i = b_0 - b_i - c_i.
\end{align*}
%are the intersection numbers 
%$$, $$ and $$, 
%respectively. 
Furthermore, the bilinear forms scheme 
is self-dual (see \cite[Section 6.1]{BCNbookDRG}), 
which, in particular, means that the matrices $P$ and $Q$ are equal. 
Therefore, 
\begin{equation}\label{eq:Qbil}
P_{ji}=Q_{ji}=p_i(\theta_j),\quad i,j\in \db{D}.  
\end{equation}
For more information on bilinear forms scheme, the reader is referred to \cite{DelsarteLP,D1978}.

\subsection{Delsarte's linear programming bound}
In this subsection we briefly recall Delsarte’s linear programming method~\cite{DelsarteLP}.%, which, combined with the duality of linear programming, provides us with a powerful tool for bounding the sizes of general codes in association schemes.

%The second eigenmatrix $Q$ plays the key role in Delsarte's linear program which provides an upper bound on the maximum size of a code with given minimum distance.
%Delsarte's linear program makes use of the notion of an association scheme. 

Let $\cX=(\Omega,\sR)$ be 
an association scheme of rank $D+1$. 
Let $\Delta$ be a non-empty subset of $\Omega$. 
The (inner) \textbf{distribution vector} of $\Delta$ is a vector $\mathbf{a}=(a_0,\ldots,a_D)$ with entries 
\[
a_i = \frac{|(\Delta\times\Delta)\cap R_i|}{|\Delta|}
\]
for every $R_i\in \sR$. 
It is clear that $a_i\geq 0$ 
for all $i\in\db{D}$, 
$\sum\limits_{i\in \db{D}} a_i = |\Delta|$, 
and the distribution vector is normalized 
so that $a_0 = 1$.

A key observation for Delsarte's LP bound is the following result.

\begin{theorem}\label{theo:LP}
\cite[Theorem 3.3]{DelsarteLP}
With the above notation, the distribution 
vector $\mathbf{a}$ satisfies 
\[
\mathbf{a}Q\geq \mathbf{0},
\]
where $Q$ is the second eigenmatrix of $\cX$.
\end{theorem}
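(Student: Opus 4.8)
The plan is to prove the inequality $\mathbf{a}Q \geq \mathbf{0}$ componentwise by expressing the $j$-th entry of $\mathbf{a}Q$ as a nonnegative multiple of a quadratic form $\mathbf{1}_\Delta^\top E_j \mathbf{1}_\Delta$, where $\mathbf{1}_\Delta$ is the characteristic vector of $\Delta$ and $E_j$ is the $j$-th primitive idempotent of the Bose-Mesner algebra. Since each $E_j$ is an orthogonal projection matrix, it is positive semidefinite, so this quadratic form is automatically nonnegative; this is where the association scheme structure does all the work.

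First I would write $\mathbf{1}_\Delta^\top A_i \mathbf{1}_\Delta = |(\Delta\times\Delta)\cap R_i|$ directly from the definition of the adjacency matrix $A_i$ of the relation $R_i$: the $(\alpha,\beta)$ entry of $A_i$ is $1$ exactly when $(\alpha,\beta)\in R_i$, so the bilinear form counts ordered pairs of points of $\Delta$ lying in $R_i$. Dividing by $|\Delta|$, this gives $a_i = \frac{1}{|\Delta|}\mathbf{1}_\Delta^\top A_i \mathbf{1}_\Delta$. Next I would use the change-of-basis relation $A_i = \sum_{j\in\db{D}} P_{ji} E_j$ (equivalently, combine $E_i = \frac{1}{|\Omega|}\sum_j Q_{ij} A_j$ appropriately) to re-express things in terms of the idempotents. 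The cleanest route is to compute, for each fixed $j$, the quantity $\mathbf{1}_\Delta^\top E_j \mathbf{1}_\Delta$ by substituting $E_j = \frac{1}{|\Omega|}\sum_{i\in\db{D}} Q_{ji} A_i$, which yields
\[
\mathbf{1}_\Delta^\top E_j \mathbf{1}_\Delta = \frac{1}{|\Omega|}\sum_{i\in\db{D}} Q_{ji}\, \mathbf{1}_\Delta^\top A_i \mathbf{1}_\Delta = \frac{|\Delta|}{|\Omega|}\sum_{i\in\db{D}} a_i Q_{ji} = \frac{|\Delta|}{|\Omega|}\,(\mathbf{a}Q)_j.
\]

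Then I would invoke the fact, recalled in the excerpt, that each $E_j$ is the orthogonal projection onto a common eigenspace of the adjacency matrices, hence $E_j = E_j^2 = E_j^\top E_j$, so $\mathbf{1}_\Delta^\top E_j \mathbf{1}_\Delta = \|E_j \mathbf{1}_\Delta\|^2 \geq 0$. Since $|\Delta|/|\Omega| > 0$, it follows that $(\mathbf{a}Q)_j \geq 0$ for every $j\in\db{D}$, which is exactly $\mathbf{a}Q \geq \mathbf{0}$.

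I do not anticipate a serious obstacle here; the argument is essentially a two-line computation once the idempotent formalism is in place. The only point requiring a little care is bookkeeping the scalar factors $|\Delta|$ and $|\Omega|$ correctly across the two change-of-basis identities, and making sure the index conventions for $Q=(Q_{ij})$ match those fixed earlier in the excerpt (so that $E_i = \frac{1}{|\Omega|}\sum_j Q_{ij}A_j$ is the row-indexing being used). Everything else — nonnegativity, symmetry, and the projection property — is supplied directly by the structure of the Bose-Mesner algebra as already stated.
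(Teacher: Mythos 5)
Your argument is correct, and it is essentially the classical proof of this inequality: the paper itself gives no proof, quoting the statement directly from Delsarte's \cite[Theorem 3.3]{DelsarteLP}, and what you write is the standard derivation found there (positive semidefiniteness of the primitive idempotents $E_j$ applied to the characteristic vector of $\Delta$, combined with $\mathbf{1}_\Delta^\top A_i\mathbf{1}_\Delta=|\Delta|\,a_i$). The one bookkeeping point you flag is real but harmless: with the paper's literal convention $E_i=\frac{1}{|\Omega|}\sum_j Q_{ij}A_j$, the quantity $\mathbf{1}_\Delta^\top E_j\mathbf{1}_\Delta=\frac{|\Delta|}{|\Omega|}\sum_i a_iQ_{ji}$ is the $j$-th entry of $\mathbf{a}Q^\top$ rather than of $\mathbf{a}Q$, whereas under Delsarte's convention ($E_j=\frac{1}{|\Omega|}\sum_i Q_{ij}A_i$, eigenspaces indexing columns) it is exactly $(\mathbf{a}Q)_j$; either way the substance of the proof and of the resulting LP constraints is unaffected, so there is no gap.
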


Then a linear programming bound for a code  
in $\cX$ is carried by Theorem \ref{theo:LP} as follows. 
For a subset of indices $M\subseteq \db{D}$, 
an \textbf{$M$-code} is a subset 
$\Delta\subseteq \Omega$ such that 
any two distinct points of $\Delta$ are in 
$R_i$ for some $i\in M$. In other words, 
$\Delta$ is an $M$-code if $a_i=0$ for 
all $i\notin M$. 
Consider the following LP formulation:
\begin{equation}\label{eq:LP}
\boxed{
\begin{array}{ll@{}ll}
\text{maximize}  &\sum_{i\in \db{D}} a_i &\\
\text{subject to} &\mathbf{a}Q\geq \mathbf{0},\\
&\mathbf{a}\geq \mathbf{0},\\
&a_0=1,\\
&a_i = 0, &\quad i\notin M.
\end{array}
}
\end{equation}

It follows from Theorem \ref{theo:LP} 
that the cardinality $|\Delta|$ of such an $M$-code 
is upper bounded by the solution of the LP\:\eqref{eq:LP}.

%%%%%%%%%%%%%%%%%%%%%%%%%%%%%%%%%
\section{An association scheme for sum-rank metric graphs}\label{sec:coco}
%%%%%%%%%%%%%%%%%%%%%%%%%%%%%%%%%
In this section, we define and study an association scheme related to sum-rank metric graphs. 
Recall that, in the case of the Hamming or bilinear forms association schemes 
(see Definitions \ref{def:hamming}, \ref{def:bf}), 
the adjacency matrix $A_1$ of the corresponding graph 
generates the respective Bose-Mesner algebra. 
In general, unlike these two partial cases, 
the sum-rank distance itself %(see Definition \ref{def:srk}) 
does not directly determine an association scheme.
However, one can define an association scheme based on the fact that the
 sum-rank metric graph is constructed as the Cartesian product of bilinear forms graphs \cite[Proposition~9]{AAA2024}. 
This scheme may possibly be larger (in the sense of Definition 
\ref{def:leq} below, see the discussion in Section \ref{sec:ASsrkgraph}) 
than we need, but it suffices to estimate Delsarte's LP bound 
on the size of sum-rank metric codes.

\subsection{Direct product of association schemes}
The concept of a direct product of association schemes is well known; 
here we follow the definition proposed in \cite{FT1985}.

\begin{definition}\label{def:direct}
Let %$\cX_1,\dots,\cX_t$ be $t\geq 1$ association schemes with 
$\cX_i = (\Omega_i,\sR_i)$ be an association scheme of rank $D_i$ with relations 
$\sR_i=\{R^i_j\mid j\in\db{D_i}\}$, $i\in[t]$. 
%and respective adjacency matrices $A^i_j$ of $R^i_j\in \sR_i$, 
%$j\in\db{D_i}$, . 
Then the \emph{\textbf{direct product}} 
$\cX_1\otimes\cdots\otimes\cX_t$ %of these schemes 
is an association scheme 
$\cX=(\Omega,\sR)$ where:
\begin{enumerate}[label=(\roman*)]
    \item $\Omega=\Omega_1\times \cdots \times \Omega_t$,
    \item $\sR=\{R_{(j_1\dots,j_t)}\mid 
    (j_1\dots,j_t)\in \db{D_1} \times\cdots\times \db{D_t}\}$, 
    where 
\[
R_{(j_1\dots,j_t)}=
\{
\left((\alpha_1,\ldots,\alpha_t),(\beta_1,\ldots,\beta_t)\right)\in 
\Omega^2\colon (\alpha_i,\beta_i)\in R_{j_i}^i\in \sR_i, i\in [t]\}.
\]
\end{enumerate} 
\end{definition}

The fact that the relations in $\sR$ indeed give an association 
scheme can be easily verified (see \cite[Section~2.2]{M1999}). 
In the more general context of coherent configurations 
\cite[Section~3.2.2]{CP2019}, which are not the focus of this paper (see Section~\ref{sec:ASsrkgraph}), the direct product is called 
the \textbf{tensor product}.

Note that the adjacency matrix $A_{(j_1\dots,j_t)}$ of the relation $R_{(j_1\dots,j_t)}\in\sR$ is the Kronecker 
product $A^1_{j_1}\otimes\cdots\otimes A^t_{j_t}$ 
of the adjacency matrices $A_{j_i}^i$  
with $j_i\in\db{D_i}$, $i\in [t]$.

\begin{lemma}\cite{M1999}
\label{l:Q}
In the notation of Definition \ref{def:direct}, 
%the above notation, 
the second eigenmatrix~$Q$ of~$\cX$ 
is the Kronecker product 
$Q_1\otimes Q_2\otimes\cdots\otimes Q_t$, where $Q_i$ is the second eigenmatrix of~$\cX_i$, $i\in [t]$.     
\end{lemma}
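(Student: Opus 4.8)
The plan is to work with the two linear bases of a Bose-Mesner algebra: the adjacency matrices $A_i$ and the primitive idempotents $E_j$, and to recognize that $Q$ is precisely the change-of-basis matrix between them, up to the normalizing factor $|\Omega|$. So the strategy is to identify the primitive idempotents of the direct product $\cX = \cX_1\otimes\cdots\otimes\cX_t$ as suitable Kronecker products of the primitive idempotents of the factors, and then read off $Q$ from the expansion $E_i = \frac1{|\Omega|}\sum_j Q_{ij} A_j$.

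\begin{proof}[Proof sketch]
First I would recall that for each factor $\cX_i$ with primitive idempotents $E^i_0,\dots,E^i_{D_i}$ one has $E^i_a E^i_b = \delta_{ab} E^i_a$, $\sum_a E^i_a = I_{\Omega_i}$, and $E^i_a = \frac1{|\Omega_i|}\sum_{j\in\db{D_i}} (Q_i)_{aj} A^i_j$. Define, for each multi-index $(a_1,\dots,a_t)\in\db{D_1}\times\cdots\times\db{D_t}$, the matrix
\[
E_{(a_1,\dots,a_t)} := E^1_{a_1}\otimes\cdots\otimes E^t_{a_t}.
\]
Using the mixed-product property $(X_1\otimes\cdots\otimes X_t)(Y_1\otimes\cdots\otimes Y_t) = (X_1Y_1)\otimes\cdots\otimes(X_tY_t)$, these matrices are pairwise orthogonal idempotents summing to $I_{\Omega_1}\otimes\cdots\otimes I_{\Omega_t} = I_\Omega$; they are symmetric since each $E^i_{a_i}$ is; and each is a nonzero projection. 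Since there are exactly $\prod_i (D_i+1)$ of them, matching the rank of $\cX$, and since each $E^i_{a_i}$ lies in the Bose-Mesner algebra of $\cX_i$ (a linear combination of the $A^i_j$), each $E_{(a_1,\dots,a_t)}$ lies in the span of the $A_{(j_1,\dots,j_t)} = A^1_{j_1}\otimes\cdots\otimes A^t_{j_t}$, i.e.\ in the Bose-Mesner algebra of $\cX$. Hence $\{E_{(a_1,\dots,a_t)}\}$ is exactly the set of primitive idempotents of $\cX$.

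Next I would expand $E_{(a_1,\dots,a_t)}$ in the adjacency basis. Substituting $E^i_{a_i} = \frac1{|\Omega_i|}\sum_{j_i} (Q_i)_{a_i j_i} A^i_{j_i}$ into the Kronecker product and using multilinearity of $\otimes$,
\[
E_{(a_1,\dots,a_t)} = \frac1{|\Omega_1|\cdots|\Omega_t|}\sum_{(j_1,\dots,j_t)} (Q_1)_{a_1 j_1}\cdots (Q_t)_{a_t j_t}\, A_{(j_1,\dots,j_t)}.
\]
Since $|\Omega| = |\Omega_1|\cdots|\Omega_t|$, comparing this with $E_i = \frac1{|\Omega|}\sum_j Q_{ij} A_j$ shows that the $\big((a_1,\dots,a_t),(j_1,\dots,j_t)\big)$ entry of the second eigenmatrix $Q$ of $\cX$ equals $\prod_{i\in[t]} (Q_i)_{a_i j_i}$, which is precisely the defining entry of the Kronecker product $Q_1\otimes\cdots\otimes Q_t$ under the natural lexicographic indexing of rows and columns by multi-indices. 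This proves $Q = Q_1\otimes\cdots\otimes Q_t$.
\end{proof}

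The main obstacle, such as it is, is bookkeeping: one must fix a consistent ordering of the multi-indices $(a_1,\dots,a_t)$ labeling the eigenspaces/relations so that the Kronecker product on the right-hand side matches the eigenmatrix entrywise, and one must be careful that the $E_{(a_1,\dots,a_t)}$ really are \emph{primitive} (not merely idempotent) — this follows from the count $\prod_i(D_i+1)$ matching the rank, together with the fact that they are mutually orthogonal nonzero idempotents in a commutative algebra of that dimension. Everything else is a direct application of the mixed-product property of the Kronecker product; no genuinely hard step arises, which is consistent with the statement being attributed to \cite{M1999} as a known fact.
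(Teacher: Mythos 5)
Your proof is correct. The paper does not prove Lemma~\ref{l:Q} at all---it is stated with a citation to \cite{M1999}---and your argument (identify the primitive idempotents of the direct product as Kronecker products $E^1_{a_1}\otimes\cdots\otimes E^t_{a_t}$ via the mixed-product property and a dimension count, then expand in the adjacency basis to read off $Q$) is exactly the standard one that underlies the cited result.
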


Using Definition \ref{def:direct}, we can consider 
an association scheme that contains the edge set of 
a sum-rank metric graph as a union of some of its relations.

\begin{definition}\label{def:leq}
    Let $\cX$ and $\cY$ be association schemes on the same set of points. We say that $\cX\leq \cY$ if and only if every relation 
    of $\cX$ is a union of some relations of $\cY$.
If this is the case, then $\cX$ is said to be a \emph{\textbf{fusion}}
(scheme) of $\cY$ and $\cY$ is a \emph{\textbf{fission}} (scheme) of $\cX$.
%    Let $\cX=(\Omega,\sR)$ and $\cY=(\Omega,\sT)$ be 
%    association schemes. 
%    We say that $\cX\leq \cY$ if and only if every relation 
%    from $\sR$ is a union of some relations from $\sT$.
%If this is the case, then $\cX$ is said to be a %\emph{\textbf{fusion}}
%(scheme) of $\cY$ and $\cY$ is a \emph{\textbf{fission}} (scheme) %of $\cX$.
\end{definition}

The Bose-Mesner algebra of a fusion scheme $\cX$ is 
a linear subspace of the Bose-Mesner algebra of $\cY$. This may serve as an alternative definition of 
a fusion \cite[Section~5.2]{Godsil}; note that what we call 
a fusion scheme is sometimes called a subscheme~\cite{Godsil}.

Note that the relation $\leq$ is a partial order on the set 
of association schemes defined on the same point set $\Omega$.
For example, the association scheme of the complete graph 
on $\Omega$ is a fusion of any scheme on $\Omega$; 
in fact, it is the smallest scheme on $\Omega$. 

\begin{lemma}\label{lm:fusion}
Let $\Gamma$ denote the sum-rank metric graph 
$\Gamma\left(\mathbb{F}_q^{{\bf n}\times {\bf m}}\right)$ and 
suppose that $\cX_i$ is the bilinear forms scheme on $\F_q^{n_i\times m_i}$, $i\in [t]$.
    Then the adjacency relation of~$\Gamma$ is a union of 
    some relations of the direct product $\cX_1\otimes\cdots\otimes\cX_t$.
\end{lemma}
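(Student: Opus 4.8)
The plan is to unwind the definitions and show that an edge of $\Gamma$ corresponds to a tuple of rank‑changes that is ``supported'' in exactly one coordinate. First I would recall from Proposition~\ref{prop:prod} that $\Gamma=\Gamma_1\Box\cdots\Box\Gamma_t$, where $\Gamma_i=\Bil_q(n_i,m_i)$, so by the definition of the Cartesian product, two vertices $X=(X_1,\dots,X_t)$ and $Y=(Y_1,\dots,Y_t)$ of $\Gamma$ are adjacent if and only if there is an index $i_0\in[t]$ with $(X_{i_0},Y_{i_0})$ an edge of $\Gamma_{i_0}$ (equivalently $\rk(X_{i_0}-Y_{i_0})=1$) and $X_i=Y_i$ for all $i\neq i_0$. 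This is of course just the statement that $\srk(X-Y)=1$.

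Next I would translate this into the language of Definition~\ref{def:direct}. In the direct product $\cX=\cX_1\otimes\cdots\otimes\cX_t$, the relation containing a pair $(X,Y)$ is $R_{(j_1,\dots,j_t)}$ where $j_i$ is the unique index with $(X_i,Y_i)\in R^i_{j_i}$, i.e.\ $j_i=\rk(X_i-Y_i)$ since $\cX_i$ is the bilinear forms scheme on $\F_q^{n_i\times m_i}$. Hence $(X,Y)$ is an edge of $\Gamma$ if and only if the tuple $(j_1,\dots,j_t)=(\rk(X_1-Y_1),\dots,\rk(X_t-Y_t))$ has exactly one entry equal to $1$ and all other entries equal to $0$; that is, $(j_1,\dots,j_t)=e_{i_0}$ for some $i_0\in[t]$, where $e_{i_0}$ is the $i_0$-th standard basis vector of $\db{n_1}\times\cdots\times\db{n_t}$.

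Therefore the adjacency relation of $\Gamma$ equals $\bigcup_{i_0\in[t]} R_{e_{i_0}}$, a union of $t$ of the relations of $\cX_1\otimes\cdots\otimes\cX_t$, which is exactly the claim. (One should note in passing that this union is nonempty and each $R_{e_{i_0}}$ is one of the legitimate relations of the direct product, since $1\in\db{n_{i_0}}$ as $n_{i_0}\geq1$, and $0\in\db{n_i}$ for the remaining coordinates.)

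There is no real obstacle here: the statement is essentially a bookkeeping exercise once Proposition~\ref{prop:prod} and Definition~\ref{def:direct} are in hand, and the only thing to be a little careful about is matching the rank index $j_i$ in the bilinear forms scheme $\cX_i$ with the rank distance in the factor graph $\Gamma_i$, and then identifying ``$\srk=1$'' with ``the index tuple is a standard basis vector''. If anything deserves a sentence of justification it is that for the bilinear forms scheme the relation index genuinely coincides with the rank of the difference, which is immediate from Definition~\ref{def:bf}.
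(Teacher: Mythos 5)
Your proof is correct and follows exactly the route the paper takes: the paper's own proof simply cites Proposition~\ref{prop:prod}, Eq.\:\eqref{eq:Acartprod}, and Definition~\ref{def:direct}, and your write-up is just the explicit unpacking of that one-line argument, identifying the edge set with $\bigcup_{i_0\in[t]} R_{e_{i_0}}$.
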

\begin{proof}
It follows directly from Proposition~\ref{prop:prod}, Eq.\:\eqref{eq:Acartprod}, and Definition \ref{def:direct}.
\end{proof}
%\begin{lemma}\label{l:cart_leq} 
%For any graphs $\Gamma_1, \Gamma_2$, one has:
%    \[\WL(\Gamma_1\otimes \Gamma_2)
%    \leq \WL(\Gamma_1)\otimes 
%    \WL(\Gamma_2).\]
%    %A coherent configuration of a Cartesian product of graphs is contained %in the tensor product of their coherent configurations.
%\end{lemma}
%\begin{proof} 
%Let $\sR$ be the set of basis relations 
%of the tensor product 
%$\WL(\Gamma_1)\otimes \WL(\Gamma_2)$. 
%Since, by definition, 
%$\WL(\Gamma_1\otimes \Gamma_2)$ 
%is the smallest coherent configuration 
%containing the edge set  $E=E(\Gamma_1\otimes \Gamma_2)$, 
%it suffices to show that 
%$E\in \sR^{\cup}$. 
%Let $\Gamma_i=(\Omega_i,E_i)$, $i=1,2$. 
%From the definition of the Cartesian product of two graphs, one can see that 
%    \[
%    E=(1_{\Omega_1}\otimes E_2)\cup (E_1\otimes 1_{\Omega_2}), 
%    \]
%    whence the claim follows.
%\end{proof}

%From Lemmas~\ref{l:prodas} and \ref{l:cart_leq} 
%we derive the following corollary for sum-rank metric graphs, which shows that the tensor product of bilinear forms schemes contains an association scheme of the sum-rank metric graph obtained as the Cartesian product of the respective bilinear forms graphs.

\subsection{An association scheme for sum-rank metric graphs}\label{ssect:as-srk}
We now define the association scheme $\cX$ of the sum-rank metric 
graph $\Gamma$ as the smallest association scheme such that 
a union of some of its relations coincides with 
the adjacency relation of $\Gamma$ 
(indeed, such a scheme exists by Lemma \ref{lm:fusion}).
Again, in the more general context of 
coherent configurations \cite[Section~2.6.1]{CP2019}, 
$\cX$ is the \textbf{coherent closure} (or 
\textbf{Weisfeiler-Leman closure}) of the graph $\Gamma$, denoted $\WL(\Gamma)$, which can be computed by using the Weisfeiler-Leman algorithm~\cite{WL}. 
%In the case $\Gamma$ is a distance-regular graph, its coherent closure $\WL(\Gamma)$ satisfies the definition of an association scheme. 
From the above discussion and Lemma~\ref{lm:fusion}, 
one has the following immediate consequence.

\begin{corollary}\label{cor:srk_leq}
    Let $\Gamma$ be a sum-rank metric graph, and let $\Gamma_i$, $i\in [t]$, be the $t$ bilinear forms 
    graphs that are the Cartesian factors of $\Gamma$. 
    Then 
    \begin{equation}\label{eq:leq}
\cX:=\WL(\Gamma) = \WL(\Gamma_1\Box\cdots\Box\Gamma_t) \leq 
\WL(\Gamma_1)\otimes \WL(\Gamma_2)\otimes\cdots\otimes\WL(\Gamma_t),
%\cX_1\otimes\cdots\otimes\cX_t; 
    \end{equation}
    moreover, $\WL(\Gamma_i)$ is a bilinear forms scheme, 
    $i\in [t]$.
\end{corollary}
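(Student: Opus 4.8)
The plan is to establish Corollary~\ref{cor:srk_leq} as a routine synthesis of three ingredients already in hand: Proposition~\ref{prop:prod}, Lemma~\ref{lm:fusion}, and the definition of $\cX=\WL(\Gamma)$ as the coherent closure. First I would recall that by Proposition~\ref{prop:prod} the sum-rank metric graph $\Gamma$ is the Cartesian product $\Gamma_1\Box\cdots\Box\Gamma_t$ of the bilinear forms graphs $\Gamma_i=\Gamma(\F_q^{n_i\times m_i})$, which immediately gives the first equality $\WL(\Gamma)=\WL(\Gamma_1\Box\cdots\Box\Gamma_t)$. The final clause, that each $\WL(\Gamma_i)$ is a bilinear forms scheme, follows because $\Gamma_i=\Bil_q(n_i,m_i)$ is distance-regular (hence its adjacency matrix generates the bilinear forms Bose--Mesner algebra by the $P$-polynomial property recalled in Section~\ref{ssec:as}), and the coherent closure of a graph whose adjacency matrix already generates an association scheme is precisely that scheme; so $\WL(\Gamma_i)$ coincides with the bilinear forms scheme on $\F_q^{n_i\times m_i}$.

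The heart of the argument is the inequality $\WL(\Gamma_1\Box\cdots\Box\Gamma_t)\leq\WL(\Gamma_1)\otimes\cdots\otimes\WL(\Gamma_t)$. Here I would use the characterization of $\WL(\Gamma)$ as the \emph{smallest} association scheme one of whose relation-unions is the adjacency relation of $\Gamma$, together with the minimality/partial-order structure of $\leq$ noted after Definition~\ref{def:leq}. By Lemma~\ref{lm:fusion} (applied with $\cX_i=\WL(\Gamma_i)$, which by the previous paragraph is the bilinear forms scheme on $\F_q^{n_i\times m_i}$), the direct product $\WL(\Gamma_1)\otimes\cdots\otimes\WL(\Gamma_t)$ is an association scheme on the vertex set $\F_q^{\bfn\times\bfm}$ whose relations can be unioned to recover the edge set of $\Gamma$. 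Since $\WL(\Gamma)$ is by definition the smallest such scheme with respect to $\leq$, we conclude $\WL(\Gamma)\leq\WL(\Gamma_1)\otimes\cdots\otimes\WL(\Gamma_t)$, which is exactly Eq.~\eqref{eq:leq}.

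I would organize the write-up in exactly that order: (1) invoke Proposition~\ref{prop:prod} for the Cartesian product structure and the first equality; (2) identify $\WL(\Gamma_i)$ with the bilinear forms scheme, citing the $P$-polynomial property and the definition of coherent closure; (3) apply Lemma~\ref{lm:fusion} to see that the direct product of these schemes admits the adjacency relation of $\Gamma$ as a union of relations; (4) invoke minimality of $\WL(\Gamma)$ to deduce the fusion relation. The main (and essentially only) subtle point is step (2): one must be slightly careful that the coherent closure of a single bilinear forms graph does not strictly refine the bilinear forms scheme — but this is standard, since a distance-regular graph's distance-relations already form an association scheme, so the Weisfeiler--Leman algorithm stabilizes there and cannot produce a proper fission. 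Everything else is bookkeeping with Definitions~\ref{def:direct} and~\ref{def:leq} and the remark that $\leq$ is a partial order, so the proof is genuinely short; it is fair to simply say it follows from the cited results as in the author's likely one-line proof, but spelling out the minimality argument makes the inequality transparent.
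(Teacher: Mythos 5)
Your proposal is correct and follows essentially the same route as the paper: the paper states the corollary as an immediate consequence of Lemma~\ref{lm:fusion} together with the definition of $\WL(\Gamma)$ as the smallest association scheme whose relations union to the adjacency relation of $\Gamma$, which is precisely your minimality argument. Your extra care in step (2) — identifying $\WL(\Gamma_i)$ with the bilinear forms scheme via distance-regularity and the $P$-polynomial property — correctly fills in the ``moreover'' clause that the paper leaves implicit.
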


The relation ``\emph{to be at distance $\ell$ 
in $\Gamma$}'' is the union of all 
relations $R_{(j_1,\ldots,j_t)}$ such that 
$j_1+j_2+\cdots+j_t=\ell$. 
Therefore, we can get an upper bound on the size 
of a sum-rank metric code with prescribed minimum distance $d$
by applying Delsarte's LP bound to a code in the direct 
product of bilinear forms schemes. 
Namely, we solve the LP\:\eqref{eq:LP} 
with the matrix $Q$ given by Lemma \ref{l:Q} 
and Eq.\:\eqref{eq:Qbil} and the distribution vector 
$\mathbf{a}=\left(a_{(j_1,\ldots,j_t)}\right)$ 
indexed by $\db{D_1} \times\cdots\times \db{D_t}$:
\begin{equation}\label{eq:LPsrk}
\boxed{
\begin{array}{ll@{}ll}
\text{maximize}  &\sum_{i\in \db{D_1} \times\cdots\times \db{D_t}} a_i &\\
\text{subject to} &\mathbf{a}Q\geq \mathbf{0},\\
&\mathbf{a}\geq \mathbf{0},\\
&a_{(0,\ldots,0)}=1,\\
&a_{(j_1,\ldots,j_t)} = 0, &\quad j_1+j_2+\cdots+j_t<d.
\end{array}
}
\end{equation}

%Note that any feasible solution to 
%the LP problem \eqref{eq:LP} 
%for an association scheme $\cY$ 
%gives a feasible solution to 
%the LP problem for a fusion of $\cY$. 
%Thus, by Corollary \ref{cor:srk_leq}, 
%the Delsarte's LP bound for sum-rank metric codes is upper bounded %by 
%the corresponding LP problem for the direct product of bilinear %forms schemes (with the matrix $Q$ given by Lemma
%\ref{l:Q}). 

%Let $A=(X,\R)$ be a symmetric association scheme. Let $S_d$ be a subset of $[D]$ such that for each $i\in S_d$ the relation $R_i$ corresponds to the pairs of elements of $X$ being at a distance at least $d$ from each other. For example, in the bilinear forms scheme $(\F^{n\times m}_q,\R)$, $n\leq m$, we have $S_d = \{d,d+1,\dots,n\}$. In the tensor product of $N$ bilinear forms schemes, we take the relations indexed by the elements of $[D_0]\times [D_1] \times\cdots\times [D_N]$ and reindex them according to the rule $$(j_0,j_1,\dots,j_N)\mapsto j_N + (D_N+1) j_{N-1} + (D_N+1)(D_{N-1}+1) j_{N-2} + \cdots + j_0 \prod\limits_{i=1}^N (D_i + 1).$$ Then $S_d$ consists of indices that correspond to relations $R_{(j_0,j_1,\dots,j_N)}$ such that $\sum\limits_{i=0}^N j_i \geq d$.

It may happen that the scheme on the right-hand side of 
Eq.\:\eqref{eq:leq} is strictly larger than the one 
on the left-hand side, which results in 
the LP problem of larger size and brings us 
to the next subsection.

\subsection{On the coherent closure of a Cartesian product of graphs}\label{sec:ASsrkgraph}
The question when the inequality 
in Eq.\:\eqref{eq:leq} is strict is a subtle one.
This happens precisely when the direct product on the right-hand side 
of the formula admits a non-trivial fusion containing 
the edge set of the graph in question as a relation. 
In the general case, not restricted to bilinear forms graphs, 
the situation is studied in \cite{CGGP} 
using the theory of coherent configurations. 
In this section, we give a partial result in this direction using spectral graph theory. 

First, we recall a simple example which shows that 
the inequality in Eq.\:\eqref{eq:leq} can be strict.
Indeed, %when all graphs 
%$\Gamma_i$ from Corollary \ref{cor:srk_leq} are complete graphs 
%of $q$ vertices, 
the Hamming association scheme on $\F_{q^n}$ %of $\Gamma$ 
%is the Hamming scheme which 
is a non-trivial fusion 
%(so, the inequality is strict) 
of the direct product of the (trivial) schemes of complete graphs.
One can easily see this, as the former scheme has rank $n+1$, 
while the latter one has rank $2^n$.
To explain this phenomenon, 
we need to recall the following definition.
%recall the definition of an exponentiation of coherent configurations.

Let $\cX=(\Omega,\sR)$, $\sR=\{R_i\mid i\in \db{D}\}$, and
$\cY=(\Omega,\sT)$, $\sT=\{T_i\mid i\in \db{D}\}$, 
be two association schemes. 
Any permutation 
%bijection $\varphi\colon I\to I, i\mapsto i^\varphi$ 
$\varphi\in \sym(\db{D})$, mapping $i\mapsto i^\varphi$,  
naturally induces a bijection from $\sR$ to $\sT$,  
which sends $R_i$ to $T_{i^\varphi}$. 
With a slight abuse of notation, 
%For the sake of simplicity, 
we also denote the latter bijection by $\varphi$.
Now, if $\varphi\in \sym(\db{D})$ %colon I\to I$ 
preserves the numbers from 
the condition~\ref{itm:interray_as} of Definition \ref{def:as} 
(namely, the intersection numbers $p_{i,j}^k$ of $\cX$ 
and $p_{i^{\varphi},j^{\varphi}}^{k^{\varphi}}$ of $\cY$
are equal for all $i,j,k\in \db{D}$), then the induced 
bijection $\varphi\colon \sR\to \sT$ 
is called an \textbf{algebraic isomorphism} 
from $\cX$ to $\cY$. 
If $\cX=\cY$, then $\varphi$ is  
an \textbf{algebraic automorphism} of $\cX$.
Clearly, algebraic automorphisms form a group under composition.

Let $\Phi$ be a group of algebraic automorphisms of $\cX$. 
Given $i\in \db{D}$, put
\[
R_i^{\Phi}=\cup_{\varphi\in\Phi}R_{i^\varphi}.
\]
One can see that the set $\sR^\Phi=\{R_i^\Phi\mid i\in \db{D}\}$ forms a partition of $\Omega^2$.
Furthermore, it follows (see \cite[Lemma~2.3.26]{CP2019}, 
\cite[Lemma~9.1.1]{Godsil}) that the pair $\cX^\Phi=(\Omega,\sR^\Phi)$ 
is an association scheme, called the \textbf{algebraic fusion} 
of $\cX$ with respect to $\Phi$.

Consider now the $t$-fold direct product:
\[
\cX^t=
\underbrace{\cX\otimes\cX\otimes\cdots\otimes\cX}_{t},\quad t\geq 2. 
\]
One can show (see \cite[Section~2.3]{CP2019}) 
that any permutation on $t$ symbols induces an 
algebraic automorphism of $\cX^t$; thus, there exists 
a non-trivial algebraic fusion of $\cX^t$, 
called the \textbf{exponentiation} of $\cX$ 
by the symmetric group $\sym(t)$.
%(by a group of algebraic automorphisms). 
Furthermore, when an association scheme $\cX$ is the coherent 
closure of a graph $\Gamma$, these algebraic automorphisms 
of $\cX^t$ leave invariant the adjacency relation of 
the Cartesian $t$-th power 
$\Gamma^{\Box t}:=\Gamma\Box \cdots\Box \Gamma$ 
%\underbrace{X\Box X\Box\cdots\Box X}_{t}$ 
of the graph $\Gamma$. Thus, the coherent closure of 
$\Gamma^{\Box t}$ is strictly less than $\cX^t$. 
For example, the Hamming scheme on $\F_{q^n}$ is the exponentiation of the trivial scheme on $q$ points 
by the symmetric group $\sym(n)$.

Therefore, when a Cartesian product of graphs contains 
isomorphic factors, its coherent closure is definitely 
smaller than the direct product of the coherent closures 
of the factors. 
In particular, this applies to a sum-rank metric graph 
in Eq.\:\eqref{eq:leq}.
Otherwise, we only have the following conjecture.

\begin{problem}\label{conj}
    Let $\Gamma$ be a Cartesian product of $t$ bilinear 
    forms graphs $\Gamma_i$, $i\in [t]$. 
    Suppose that the coherent closures 
    $\WL(\Gamma_i)$, $\WL(\Gamma_j)$
    are not algebraically isomorphic\footnote{Two bilinear forms schemes on $\F_q^{n_1\times m_1}$ and 
    $\F_q^{n_2\times m_2}$ are (algebraically) 
    isomorphic if and only if $\{n_1,m_1\}=\{n_2,m_2\}$.}
    for all $i\ne j\in [t]$.
    Is it true that
    then %&Eq.\:\eqref{eq:leq} holds with equality: 
    \begin{equation}\label{eq:conjecture}
\WL(\Gamma_1\Box\cdots\Box\Gamma_t) = 
\WL(\Gamma_1)\otimes \WL(\Gamma_2)\otimes\cdots \otimes \WL(\Gamma_t)?
%\cX_1\otimes\cdots\otimes\cX_t.          
    \end{equation}    
\end{problem}

In other words, Eq.\:\eqref{eq:conjecture} states that 
the direct product on the right-hand side does not admit 
a non-trivial fusion containing the edge set of the graph 
in question as a relation. Note that a general 
necessary and sufficient existence condition of fusions 
is given by the Bannai-Muzychuk criterion \cite[Lemma~2.48]{BBIT}: see also Remark \ref{rem:fusion}.
In Theorem \ref{thm:WLeq} below, we prove a result
which shows the validity of Eq.\:\eqref{eq:conjecture}
under a stronger assumption than that in Problem  
\ref{conj}.

Note that the above discussion applies to the general case 
when the coherent closure of a graph is not an association 
scheme. Moreover, instead of the $t$-fold direct product 
one can considers a direct product of algebraically 
isomorphic schemes~\cite{CGGP}. 
For example, the Doob and Hamming association schemes on $\F_4^{2s+t}$ are algebraically isomorphic (for all natural $s,t$); 
both are the coherent closures of the Cartesian products 
of graphs: a Doob graph 
${Sh}^{\Box s}\Box K_4^{\Box t}$ in 
the former case (here ${Sh}$ stands for the Shrikhande graph), and a Hamming graph $K_4^{\Box (2s+t)}$ in the latter case. Note also that, 
except for the obvious trivial cases,  
both association schemes are strictly less than the direct products 
of the schemes of factors in the corresponding Cartesian products.

Next, we prove a sufficient condition under which 
the coherent closure of a Cartesian product of two graphs equals 
the direct product of their coherent closures.  
The condition is formulated in terms of the eigenvalues 
of the adjacency matrices of the graphs.
Recall that the \textbf{adjacency algebra} of a graph 
is a coherent algebra (see Section \ref{ssec:as}) generated 
by its adjacency matrix. %(recall that for two vertices $u,v$ and the adjacency matrix $A$ of some graph $\Gamma$, the $(u,v)$-entry of $A^k$ is non-zero if and only if the distance between $u$ and $v$ in $\Gamma$ is $k$, and the entry is $0$ otherwise).  
In the proof of Theorem \ref{thm:WLeq} below, 
the reader may assume that the coherent closures 
$\WL{(\Gamma_1\Box \Gamma_2)}$, $\WL{(\Gamma_1)}$, $\WL{(\Gamma_2)}$ 
are association schemes, in which case the 
adjacency algebras of the graphs $\Gamma_1\Box \Gamma_2$, $\Gamma_1$, $\Gamma_2$ are simply the Bose-Mesner algebras (see Section \ref{ssec:as}) of the corresponding schemes.
%$\WL{(\Gamma_1\Box \Gamma_2)}$, $\WL{(\Gamma_1)}$, $\WL{(\Gamma_2)}$, respectively. 
However, the proof remains valid in the general case 
when these coherent closures are coherent configurations; 
the corresponding adjacency algebras are coherent algebras 
associated with coherent configurations \cite[Section~2.3.1]{CP2019}.

\begin{theorem}\label{thm:WLeq}
    Let $\Gamma_i$, $i\in\{1,2\}$, be a graph with precisely $s_i$ pairwise distinct eigenvalues $\theta_{ij}$, $j\in [s_i]$. Then 
    \[
    \WL{(\Gamma_1\Box \Gamma_2)}=\WL(\Gamma_1)\otimes \WL(\Gamma_2)
    \]
    if the set $S:=\{\theta_{1j}+\theta_{2k}\mid j\in [s_1], k\in [s_2]\}$ is of cardinality $s_1s_2$.
\end{theorem}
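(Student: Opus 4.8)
The plan is to show both inclusions of algebras. Write $\mathcal{W}=\WL(\Gamma_1\Box\Gamma_2)$ for the coherent closure, which is generated as a coherent algebra by $A=A_1\otimes I_{\Omega_2}+I_{\Omega_1}\otimes A_2$ (Eq.~\eqref{eq:Acartprod}), and let $\mathcal{W}_i=\WL(\Gamma_i)$ be generated by $A_i$. The algebra $\mathcal{W}_1\otimes\mathcal{W}_2$ (Bose--Mesner algebra of the direct product) clearly contains $A$, hence is a coherent algebra containing the adjacency algebra of $\Gamma_1\Box\Gamma_2$; by minimality of the coherent closure, $\mathcal{W}\subseteq\mathcal{W}_1\otimes\mathcal{W}_2$. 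The real work is the reverse inclusion: I must show that the Schur-closed, $*$-closed algebra generated by $A$ already contains $\mathcal{W}_1\otimes I_{\Omega_2}$ and $I_{\Omega_1}\otimes\mathcal{W}_2$ (and hence, by Schur and ordinary products, all of $\mathcal{W}_1\otimes\mathcal{W}_2$).

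First I would isolate the two summands of $A$ using the spectral hypothesis. Let $E_{1j}$, $j\in[s_1]$, and $E_{2k}$, $k\in[s_2]$, be the primitive idempotents (spectral projections) of $A_1$ and $A_2$ onto their eigenspaces for $\theta_{1j}$, $\theta_{2k}$. Then $\{E_{1j}\otimes E_{2k}\}$ is a complete system of orthogonal idempotents summing to $I$, and $A=\sum_{j,k}(\theta_{1j}+\theta_{2k})\,E_{1j}\otimes E_{2k}$. The assumption that the $s_1s_2$ sums $\theta_{1j}+\theta_{2k}$ are pairwise distinct means exactly that $A$ is a diagonalizable matrix with $s_1 s_2$ distinct eigenvalues, each eigenvalue attached to a single block $E_{1j}\otimes E_{2k}$. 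Consequently each $E_{1j}\otimes E_{2k}$ is a polynomial in $A$ (the appropriate Lagrange interpolation polynomial), so every $E_{1j}\otimes E_{2k}$ lies in the adjacency algebra of $\Gamma_1\Box\Gamma_2$, a fortiori in $\mathcal{W}$.

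Next, from the idempotents $E_{1j}\otimes E_{2k}$ I recover $A_1\otimes I$ and $I\otimes A_2$ separately: summing over $k$ gives $E_{1j}\otimes I=\sum_k E_{1j}\otimes E_{2k}\in\mathcal{W}$, and likewise $I\otimes E_{2k}\in\mathcal{W}$; then $A_1\otimes I=\sum_j\theta_{1j}(E_{1j}\otimes I)$ and $I\otimes A_2=\sum_k\theta_{2k}(I\otimes E_{2k})$ lie in $\mathcal{W}$. Since $\mathcal{W}$ is a coherent algebra it is closed under Schur and ordinary products and transposition, so it contains the coherent algebra generated by $A_1\otimes I$, which is $\mathcal{W}_1\otimes I_{\Omega_2}$ (one checks the coherent closure of $\Gamma_1\Box K$ on the second coordinate "factors through" the first; more directly, any word in $A_1\otimes I$, its transpose, and Schur products is of the form $M\otimes I$ with $M$ a corresponding word in $A_1$, so the generated algebra is exactly $\mathcal{W}_1\otimes I$). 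Similarly $I_{\Omega_1}\otimes\mathcal{W}_2\subseteq\mathcal{W}$. Taking ordinary products of elements $M\otimes I$ and $I\otimes N$ yields $M\otimes N$ for all $M\in\mathcal{W}_1$, $N\in\mathcal{W}_2$, so $\mathcal{W}_1\otimes\mathcal{W}_2\subseteq\mathcal{W}$, completing the equality.

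The main obstacle is the bookkeeping in the last step: verifying cleanly that the coherent algebra generated by $A_1\otimes I_{\Omega_2}$ inside $\mathbb{R}^{(\Omega_1\times\Omega_2)\times(\Omega_1\times\Omega_2)}$ is precisely $\mathcal{W}_1\otimes I_{\Omega_2}$ rather than something smaller. This is where the identity $(M\otimes I)\circ(M'\otimes I)=(M\circ M')\otimes I$ for the Schur product $\circ$, together with $(M\otimes I)(M'\otimes I)=(MM')\otimes I$ and $(M\otimes I)^{\top}=M^{\top}\otimes I$, must be used to transport the closure operation through the tensor factor; the subtlety is that the Schur identity element of $\mathbb{R}^{(\Omega_1\times\Omega_2)^2}$ is $J_{\Omega_1}\otimes J_{\Omega_2}$, not $J_{\Omega_1}\otimes I_{\Omega_2}$, so one should instead argue via the idempotents and the characterization of $\mathcal{W}_1$ as $\mathrm{Alg}\langle A_1, J_{\Omega_1}, I_{\Omega_1}\rangle$ under $+,\cdot,\circ,\,{}^{\top}$, noting $J_{\Omega_1}\otimes I_{\Omega_2}$ and $J_{\Omega_1}\otimes J_{\Omega_2}$ are both already available in $\mathcal{W}$ (the former as $\sum_j (\text{const})\, E_{1j}\otimes I$ is not quite it — rather $J_{\Omega_1}\otimes I = |\Omega_1|\,(E_{1,\text{triv}}\otimes I)$ when $\Gamma_1$ is regular, and regularity of the bilinear-forms graphs we apply this to is guaranteed). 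I would phrase the argument to remain valid in the coherent-configuration generality flagged before the statement, where "Bose--Mesner algebra" is replaced by the coherent algebra of the configuration and the same idempotent manipulations go through verbatim.
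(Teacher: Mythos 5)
Your argument is correct and follows essentially the same route as the paper's: both proofs reduce the nontrivial inclusion to showing that $A_1\otimes I_{\Omega_2}$ is a polynomial in $A=A_1\otimes I_{\Omega_2}+I_{\Omega_1}\otimes A_2$, which is exactly where the hypothesis $|S|=s_1s_2$ enters, and your spectral-idempotent/Lagrange-interpolation derivation of that fact is an equivalent repackaging of the paper's Vandermonde rank count in $\mathbb{R}[x,y]/I$. If anything you are more careful than the paper at the final bookkeeping step, correctly flagging that passing from $A_1\otimes I_{\Omega_2}\in\WL(\Gamma_1\Box\Gamma_2)$ to $\mathcal{W}_1\otimes I_{\Omega_2}\subseteq\WL(\Gamma_1\Box\Gamma_2)$ also requires producing $J_{\Omega_1}\otimes I_{\Omega_2}$ (the Schur unit of the ambient algebra is $J_{\Omega_1}\otimes J_{\Omega_2}$, and the paper asserts this containment without comment), which your trivial-idempotent argument supplies for the connected regular factors relevant to the application.
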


%%%%%%%%%%%%%%%% Rewritten proof

\begin{proof}
As in Corollary \ref{cor:srk_leq}, one can see that 
$\WL{(\Gamma_1\Box \Gamma_2)}\leq \WL(\Gamma_1)\otimes \WL(\Gamma_2)$. 
Therefore, to show the result, we need to prove 
$\WL{(\Gamma_1\Box \Gamma_2)}\geq 
\WL(\Gamma_1)\otimes \WL(\Gamma_2)$. Note that the fact that $\WL(\Gamma_1)\otimes \WL(\Gamma_2)$ is a fusion of $\WL(\Gamma_1\Box\Gamma_2)$ implies that the adjacency algebra $\mathcal{A}$ of $\Gamma_1\Box \Gamma_2$ contains the coherent (Bose-Mesner) algebra of $\WL(\Gamma_1)\otimes \WL(\Gamma_2)$. Combining this and \cite[Theorem~3.2.25]{CP2019}, we conclude that it is sufficient to prove that $\mathcal{A}$ contains the tensor product of the adjacency algebras $\mathcal{A}_1$ and $\mathcal{A}_2$ of $\Gamma_1$ and $\Gamma_2$, respectively.

To prove the latter statement, it in turn suffices to 
prove that 
\begin{equation}\label{eq:A1A2}
    A_1\otimes I_{n_2}\in \mathcal{A}\quad\text{or}\quad I_{n_1}\otimes A_2\in \mathcal{A},
\end{equation}
where $A_1,A_2$ denote the adjacency matrices of the graphs $\Gamma_1,\Gamma_2$ on $n_1,n_2$ vertices, respectively.
Indeed, it is sufficient to prove only one of the two inclusions in Eq.\:\eqref{eq:A1A2}: the adjacency algebras $\mathcal{A},\mathcal{A}_1,\mathcal{A}_2$ 
are generated (as coherent algebras) by the matrices $A=A_1\otimes I_{n_2}+I_{n_1}\otimes A_2$~(see Eq.\:\eqref{eq:Acartprod}), $A_1$, $A_2$, respectively.
Assume, for example, that $\mathcal{A}$ contains $A_1\otimes I_{n_2}$ and hence $\mathcal{A}$ contains the subalgebra $\mathcal{A}_1\otimes I_{n_2} = \{X\otimes I_{n_2}\,\colon\,X\in\mathcal{A}_1\}$.
Since $A\in\mathcal{A}$, it follows that $\mathcal{A}$ contains $A-A_1\otimes I_{n_2}=I_{n_1}\otimes A_2$ and hence the subalgebra $I_{n_1}\otimes \mathcal{A}_2 = \{I_{n_1}\otimes X\,\colon\,X\in\mathcal{A}_2\}$.
Thus, $\mathcal{A}$ contains both $\mathcal{A}_1\otimes I_{n_2}$ and $I_{n_1}\otimes \mathcal{A}_2$, and hence
\[
\mathcal{A}\supseteq 
\left(\mathcal{A}_1\otimes I_{n_2}\right)\cdot 
\left(I_{n_1}\otimes \mathcal{A}_2\right)=
\mathcal{A}_1\otimes\mathcal{A}_2,
\]
as required. 

Next, to prove Eq.\:\eqref{eq:A1A2}, we first note that 
\[
A^k=\sum_{i=0}^k {k\choose i}A_1^i\otimes A_2^{k-i}, 
\]
which follows from Eq. \eqref{eq:Acartprod} and 
the standard properties of Kronecker product of matrices.
%This can be derived from the well-known observation in algebraic graph theory that for any two vertices $(u_1,u_2)$ and $(v_1,v_2)$ of the graph $\Gamma_1\Box\Gamma_2$ at distance $k$ from each other in~$\Gamma_1\Box\Gamma_2$, the sum of distances between $u_1$ and $v_1$ in $\Gamma_1$ and between $u_2$ and $v_2$ in $\Gamma_2$ is equal to $k$. Moreover, for some $i\in\db{k}$ the entries of powers of adjacency matrices $(A^k)_{(u_1,u_2),(v_1,v_2)}$, $(A_1^i)_{u_1,v_1}$, $(A_2^{k-i})_{u_2,v_2}$ are equal to the number of $k$-walks in $\Gamma_1\Box\Gamma_2$, $i$-walks in $\Gamma_1$, and $(k-i)$-walks in $\Gamma_2$, respectively.

%Consider the mapping $\overline{\iota}$ from the set of square matrices of size $n_1n_2$ to the ring of polynomials $\mathbb{R}[x,y]$ induced by the rules $\overline{\iota}\colon A_1\otimes I_{n_2}\mapsto x$ and $\overline{\iota}\colon I_{n_1}\otimes A_2\mapsto y$. Let $f_1$ and $f_2$ be the minimal polynomials of $A_1$ and $A_2$, respectively, and let $I$ denote the ideal of $\mathbb{R}[x,y]$ generated by $f_1(x)$ and $f_2(y)$. 
%We then define $\iota$ to be the isomorphism between $\mathcal{A}$ and the subalgebra $\mathcal{K}$ of $\mathbb{R}[x,y]/I$ generated by the polynomial $x+y$, with the rule $\iota\colon A=A_1\otimes I_{n_2}+I_{n_1}\otimes A_2\mapsto x+y$.

Let $f_i$, $i\in \{1,2\}$, be the minimal polynomial 
of $A_i$ and let $I$ denote the ideal of 
the ring of polynomials $\mathbb{R}[x,y]$, generated by $f_1(x)$ and $f_2(y)$. 
Then the mappings $A_1\otimes I_{n_2}\mapsto x$ 
and $I_{n_1}\otimes A_2\mapsto y$, define 
an isomorphism, say $\iota$, between $\mathcal{A}$ and 
the subalgebra $\mathcal{K}$ of $\mathbb{R}[x,y]/I$ 
generated by the polynomial $x+y$, 

For $i\in\{1,2\}$, since $f_i$ is the minimal polynomial of $A_i$, its degree is $s_i$. It follows that the algebra $\mathbb{R}[x,y]/I$ is of dimension $N=s_1s_2$ and has a basis consisting of $x^iy^j$, $i\in\db{s_1-1}, j\in\db{s_2-1}$. In particular, the dimension of $\mathcal{K}$ is at most $N$.

Consider a linear mapping $\epsilon\colon \mathcal{K}\to \mathbb{R}^{N}$ 
defined for $p\in \mathcal{K}$ as follows:
\[
\epsilon\colon p(x,y)\mapsto 
\left(
p(\theta_{11},\theta_{21}), 
p(\theta_{11},\theta_{22}), 
\ldots, 
p(\theta_{1s_1},\theta_{2s_2})
\right).
\]
Let $K=\langle \epsilon(1),\epsilon(x+y),\epsilon((x+y)^2),\ldots,\epsilon((x+y)^{N-1})\rangle$ 
be a subspace of $\mathbb{R}^N$, the image of $\epsilon$. Due to the Rank-Nullity theorem, the dimension of $\mathcal{K}$ is lower-bounded by the dimension of the image of $\epsilon$.
Moreover, $\dim K$ equals the rank $r$ of the following $N\times N$ matrix:
\[
\left[
\begin{array}{ccc}
   1  & \ldots & 1 \\
   \theta_{11}+\theta_{21} & \ldots & \theta_{1s_1}+\theta_{2s_2}\\
   (\theta_{11}+\theta_{21})^2 & \ldots & (\theta_{1s_1}+\theta_{2s_2})^2\\
   \vdots & \vdots & \vdots
\end{array}
\right],
\]
which is a Vandermonde matrix. Hence $r$ equals the number of pairwise distinct numbers in the set $S$.
Therefore, if $|S|=N$, then the dimension of $\mathcal{K}$ is exactly $N$, and the elements $1,(x+y),\ldots,(x+y)^{N-1}$ constitute another basis of $\mathcal{K}$.
Thus, the elements of one basis $x^iy^j$, $i\in\db{s_1-1}$, $j\in\db{s_2-1}$, can be expressed through the elements of another basis $(x+y)^i,\,i\in\db{N-1}$. In particular,
$$x = \sum\limits_{i\in \db{N-1}} \alpha_i (x+y)^i$$
for some $\alpha_i\in \mathbb{R}$, $i\in\db{N-1}$. By applying $\iota^{-1}$, we get
$$A_1\otimes I_{n_2} = \sum\limits_{i\in \db{N-1}} \alpha_i A^i,$$
which implies $A_1\otimes I_{n_2}\in\mathcal{A}$, as required by Eq.\:\eqref{eq:A1A2}.
\end{proof}

\begin{remark}\label{rem:fusion}
For a sum-rank metric graph $\Gamma=\Gamma(\F^{\bfn\times\bfm}_q)$, 
which is the Cartesian product of $t$ bilinear forms 
graphs $\Gamma_i = \Bil_q(n_i,m_i)$, $i\in[t]$, 
the set of distinct eigenvalues of~$\Gamma$ is 
contained in the following set:  
$$S=\left\{ \sum\limits_{i = 1}^t \frac1{q-1} \left( (q^{n_i-j_i}-1)(q^{m_i}-q^{j_i})- q^{j_i} + 1 \right) \colon 0\leq j_i \leq n_i, i\in[t] \right\}.$$
Theorem~\ref{thm:WLeq} states that $\WL(\Gamma)$ is equal 
to the direct product of~$\WL(\Gamma_i)$, $i\in[t]$, 
if $S$ has $(n_1+1)\cdots(n_t+1)$ distinct elements.
One can easily find instances when this condition fails. 
For example, in case $t=2$, 
two eigenvalues corresponding to distinct pairs   
$(j_1,j_2)$ and $(j_1',j_2')$, $0\leq j_i,j_i'\leq n_i, i=1,2$, 
coincide if and only if 
%\[
%q^{n_1+m_1-j_1}+q^{n_2+m_2-j_2} = q^{n_1+m_1-j_1'}+q^{n_2+m_2-j_2'},
%\]
%which holds if and only if 
$n_2-n_1+m_2-m_1=j_2-j_1'=j_2'-j_1$. 

Despite this, to support the affirmative answer to Problem \ref{conj}, 
we checked the Bannai-Muzychuk criterion for 
the direct product on the right-hand side of 
Eq. \eqref{eq:conjecture} 
for all sum-rank metric graphs over $\mathbb{F}_2$ 
with $t=2$, $\sum_{i\in [t]}m_i\leq 6$, and  
$t=3$, $\sum_{i\in [t]}m_i\leq 11$, and found 
no counterexamples. 
\end{remark}

\section{Bounds performance}\label{sec:boundcomp}
%%%%%%%%%%%%%%%%%%%%%%%%%%%%%%%%%

Tables~\ref{tab:q=2}, \ref{tab:q=3}, and \ref{tab:q=4} list the non-distance regular sum-rank metric graphs (restricted in the number of vertices $|\Omega|$ and the number of blocks $t$) for which we computed the Delsarte's LP bound on $\alpha_{d-1}$ by solving the LP\:\eqref{eq:LPsrk}, as well as the other previously known bounds from~\cite{byrne2021fundamental} and the Ratio-type bound introduced in~\cite{AAA2024}. %The latter is calculated using the LP method~\cite{F2020} for $d\geq 5$ or explicitly for $d\in\{3,4\}$ using the closed formul{\ae} shown in Theorem~\ref{thm:hoffman-k=2}~\cite{ACF2019} and Theorem~\ref{thm:hoffman-k=3}~\cite{KN2022}, respectively.
Based on these computational experiments on relatively small instances, it is observed that the obtained bounds outperform all
previously known bounds. While we omit the cases when the result of the Delsarte's LP bound coincides with one of the previously achieved bounds, there are no examples of sum-rank metric graphs under the specified restrictions on $|\Omega|$ and $t$ for which the Delsarte's LP bound returns a result that is strictly larger than the result of one of the other upper bounds.

The column ``RT$_{d-1}$'' gives the values of the Ratio-type bound from Theorem~\ref{thm:hoffman-like}, which is calculated either explicitly in case $d\in\{3,4\}$ using the results of Theorem~\ref{thm:hoffman-k=2}~\cite{ACF2019} and Theorem~\ref{thm:hoffman-k=3}~\cite{KN2022}, or using the LP~\eqref{eq:FiolLP}~\cite{F2020}.
The columns ``D$_d$'', ``iS$_d$'', ``iH$_d$'', ``iE$_d$'', ``S$_d$'', ``SP$_d$'', ``PSP$_d$'' give the values of the Delsarte's LP\:\eqref{eq:FiolLP}, Induced Singleton, Induced Hamming, Induced Elias, Singleton, Sphere-Packing, and Projective Sphere-Packing bounds, respectively (see Theorems~\ref{thm:induced} and \ref{thm:non-induced}).
%When a bound is not applicable, we write ``NA''.

Since the code cardinality is always an integer, the columns only contain integer values, which are sometimes obtained by taking the floor of the real value given by the bound.

Note that the sum-rank metric graphs considered in Tables~\ref{tab:q=2}, \ref{tab:q=3}, and \ref{tab:q=4} are restricted not only in the size of the vertex set of the graph, but also in $t$, the number of blocks. Since calculating the Delsarte's LP bound requires taking a Kronecker product of $t$ matrices, the computation speed of the bound changes significantly with the increase of $t$. This obstacle is avoided when using the other bounds presented, particularly the Ratio-type bound, which, unlike the Delsarte's LP bound, does not depend as much on the structure of the graph in its design. Hence the Ratio-type bound can be useful in case $t$ is large, despite it being often outperformed by Delsarte's LP bound for smaller values of $t$. The Ratio-type bound can also be computed without the use of LP for $d=3,4$ from explicit formul{\ae}~\cite{ACF2019,KN2022,AAA2024}, which also puts it at advantage in this case.

To conclude this section, we discuss the relationship between the Delsarte's LP bound and the Lov\'asz theta number. For a graph $\Gamma$, we define $\Gamma^k$ to be a \textbf{$k$-th power} of $\Gamma$ if the vertex set of $\Gamma^k$ is the same as the vertex set of $\Gamma$, and two vertices are adjacent in $\Gamma^k$ if and only if they are at distance at most $k$ in $\Gamma$. It is then clear that the $k$-independence number of~$\Gamma$ is exactly the independence number of $\Gamma^k$. It is well-known that the independence number is upper bounded by the Lov\'asz theta number, denoted by $\vartheta$~\cite{Lovasz}. Thus we use $\vartheta_k$ to denote the Lov\'asz theta number of $\Gamma^k$, which is an upper bound on $\alpha_k$. The number $\vartheta_k$ can be estimated using Semidefinite Programming (SDP) as follows~\cite{Lovasz}: 
Let $\Gamma=(\Omega,E)$ be a graph on $n$ vertices, and let $\mathcal{S}_+^n$ denote the set of all $n\times n$ symmetric positive semidefinite matrices. Then $\vartheta$ is the solution of:
\begin{equation}\label{eq:LovaszSDP}
\boxed{
\begin{array}{ll@{}l}
\text{maximize}  &\operatorname{tr}(BJ) &\\
\text{subject to} &\operatorname{tr}B=1, &\\
&b_{ij}=0, &\quad (i,j)\in E,\,i,j\in[n],\\
&B\in\mathcal{S}_+^n, &\\ 
\end{array}
}
\end{equation}
where $J$ is an all-ones matrix of size $n\times n$. Note that $\operatorname{tr}(BJ)$ is the sum of the entries in $B$.

\begin{remark}
    In~\cite{S79}, it was shown that, for graphs derived from symmetric association schemes, the bound obtained through Delsarte's LP method~\cite{DelsarteLP} coincides with the number $\vartheta'$, which is calculated similarly to $\vartheta$ using SDP\:\eqref{eq:LovaszSDP}, with the additional constraint that $B$ must be a non-negative matrix. In the same paper~\cite{S79} it was proven that $\alpha \leq \vartheta' \leq \vartheta$, meaning that the Delsarte's LP bound performs no worse than the Lov\'asz theta bound. This is confirmed by computations on sum-rank metric graphs given in Table~\ref{tab:Lovasz} in Appendix.
\end{remark}

%%%%%%%%%%%%%%%%%%%%%%%%%%%%%%%%%%%%%%%%%%%%%
\section{Concluding remarks}\label{sec:final}
%%%%%%%%%%%%%%%%%%%%%%%%%%%%%%%%%%%%%%%%%%%%%
In this work we show how to implement Delsarte's LP method to obtain an upper bound on the maximum size of a code with a given minimum distance in sum-rank metric, which does not correspond to a distance-regular graph (unlike the Hamming metric or rank metric). We do so by considering the direct product of the association schemes of the Cartesian factors of the corresponding sum-rank metric graph.
Our results mostly make use of the structure of the space itself and not of the properties of matrix ranks that define the distance between the elements. Thus, we expect that this approach for constructing association schemes could also be applied to other metrics that give rise to graphs with a similar structure.
In particular, most results of Section~\ref{sec:ASsrkgraph} are provided in a general form and could be applied to any Cartesian product of graphs instead of a sum-rank metric graph.
For instance, codes in the Lee-metric can be viewed as subsets 
of vertices in the standard path-length metric of 
the Cartesian products of cycles; note that in \cite{A1982} the linear programming bound for Lee-codes 
was obtained by applying Delsarte's LP method to the explicitly defined 
Lee association scheme \cite{S1986}.

The application of Delsarte's LP bound in case the graph corresponding to the distance metric is distance-regular also gives rise to some open problems. Consider the rank-metric case, \textit{i.e.} the case of sum-rank metric graphs with $t=1$. The computational experiments on small graphs (up to $10^7$ vertices with diameter at least~$3$) shown in Table~\ref{tab:t=1} suggest that for such graphs, both Delsarte's LP bound computed with the LP\:\eqref{eq:LPsrk} and the Ratio-type bound of Theorem~\ref{thm:hoffman-like} computed with the LP \eqref{eq:FiolLP}  return the same result (this concerns the bounds on the size of the maximum code with minimum distance $d\in[3,n]$). Such output also coincides with the Singleton bound, which is known to be always met in the sum-rank metric case~\cite{D1978}. Thus, we suspect that the first two bounds might be equivalent in the case of rank-metric graphs.

%%%%%%%%%%%%%%%%%%%%%%%%%%%%%%%%%%%%%%%%%%%%%%%%%%%%%%
\subsection*{Acknowledgements} 
%%%%%%%%%%%%%%%%%%%%%%%%%%%%%%%%%%%%%%%%%%%%%%%%%%%%%%
Aida Abiad is supported by NWO (Dutch Research Council) through the grant VI.Vidi.213.085. The research of Alexander Gavrilyuk is supported 
by JSPS KAKENHI Grant Number 22K03403. Antonina P. Khramova is supported by NWO via the grant OCENW.KLEIN.475. This work was initiated during the RICCOTA conference, Croatia, 
in July 2023; the authors would like to thank 
the organizers of the event.

\bibliographystyle{abbrv}
\bibliography{references}

\newpage
\section*{Appendix}

\begin{table}[!htbp]
\centering
{\tiny
\[
\begin{array}{|ccllc|c|cc|ccc|ccc|}
\hline
t & q & {\bf n} & {\bf m} & d & |\Omega| & 	\text{RT}_{d-1} & 	\text{D}_d & 	\text{iS}_d  & 	\text{iH}_d & 	\text{iE}_d  & 	\text{S}_d  & 	\text{SP}_d  & 	\text{PSP}_d \\ \hline
2 & 2 & [2, 2] & [2, 2] & 3 & 256 & 11~\text{\cite{ACF2019}} & 10 & 16 & 19 & 34 & 16 & 13 & 13 \\
3 & 2 & [2, 2, 1] & [2, 2, 1] & 3 & 512 & 25~\text{\cite{ACF2019}} & 20 & 64 & 64 & 151 & 32 & 25 & 25 \\
3 & 2 & [2, 2, 1] & [2, 2, 1] & 4 & 512 & 10~\text{\cite{KN2022}} & 6 & 16 & 64 & 27 & 8 & 25 & 18 \\
3 & 2 & [2, 2, 1] & [2, 2, 2] & 3 & 1024 & 38~\text{\cite{ACF2019}} & 34 & 64 & 64 & 151 & 64 & 46 & 46 \\
3 & 2 & [2, 2, 1] & [2, 2, 2] & 4 & 1024 & 15~\text{\cite{KN2022}} & 8 & 16 & 64 & 27 & 16 & 46 & 36 \\
3 & 2 & [2, 2, 2] & [2, 2, 2] & 3 & 4096 & 146~\text{\cite{ACF2019}} & 133 & 256 & 215 & 529 & 256 & 146 & 146 \\
3 & 2 & [2, 2, 2] & [2, 2, 2] & 4 & 4096 & 45~\text{\cite{KN2022}} & 32 & 64 & 215 & 119 & 64 & 146 & 102 \\
3 & 2 & [2, 2, 2] & [2, 2, 2] & 5 & 4096 & 14~\text{\cite{F2020}} & 6 & 16 & 26 & 19 & 16 & 14 & 13 \\
4 & 2 & [2, 1, 1, 1] & [2, 2, 2, 1] & 3 & 512 & 28~\text{\cite{ACF2019}} & 24 & 64 & 64 & 151 & 32 & 30 & 30 \\
4 & 2 & [2, 1, 1, 1] & [2, 2, 2, 1] & 4 & 512 & 11~\text{\cite{KN2022}} & 6 & 16 & 64 & 27 & 8 & 30 & 32 \\
4 & 2 & [2, 1, 1, 1] & [2, 2, 2, 2] & 3 & 1024 & 44~\text{\cite{ACF2019}} & 42 & 64 & 64 & 151 & 64 & 53 & 53 \\
4 & 2 & [2, 1, 1, 1] & [2, 2, 2, 2] & 4 & 1024 & 18~\text{\cite{KN2022}} & 10 & 16 & 64 & 27 & 16 & 53 & 64 \\
4 & 2 & [2, 2, 1, 1] & [2, 2, 1, 1] & 3 & 1024 & 46~\text{\cite{ACF2019}} & 40 & 256 & 215 & 529 & 64 & 48 & 48 \\
4 & 2 & [2, 2, 1, 1] & [2, 2, 1, 1] & 4 & 1024 & 19~\text{\cite{KN2022}} & 12 & 64 & 215 & 119 & 16 & 48 & 36 \\
4 & 2 & [2, 2, 1, 1] & [2, 2, 2, 1] & 3 & 2048 & 85~\text{\cite{ACF2019}} & 68 & 256 & 215 & 529 & 128 & 89 & 89 \\
4 & 2 & [2, 2, 1, 1] & [2, 2, 2, 1] & 4 & 2048 & 32~\text{\cite{KN2022}} & 16 & 64 & 215 & 119 & 32 & 89 & 73 \\
4 & 2 & [2, 2, 1, 1] & [2, 2, 2, 1] & 5 & 2048 & 12~\text{\cite{F2020}} & 5 & 16 & 26 & 19 & 8 & 10 & 9 \\
4 & 2 & [2, 2, 1, 1] & [2, 2, 2, 2] & 3 & 4096 & 146~\text{\cite{ACF2019}} & 134 & 256 & 215 & 529 & 256 & 163 & 163 \\
4 & 2 & [2, 2, 1, 1] & [2, 2, 2, 2] & 4 & 4096 & 48~\text{\cite{KN2022}} & 32 & 64 & 215 & 119 & 64 & 163 & 146 \\
4 & 2 & [2, 2, 1, 1] & [2, 2, 2, 2] & 5 & 4096 & 17~\text{\cite{F2020}} & 7 & 16 & 26 & 19 & 16 & 17 & 16 \\
5 & 2 & [2, 1, 1, 1, 1] & [2, 1, 1, 1, 1] & 5 & 256 & 5~\text{\cite{F2020}} & 2 & 16 & 26 & 19 & 4 & 4 & 3 \\
5 & 2 & [2, 1, 1, 1, 1] & [3, 1, 1, 1, 1] & 5 & 1024 & 8~\text{\cite{F2020}} & 2 & 64 & 336 & 240 & 4 & 6 & 3 \\
5 & 2 & [2, 1, 1, 1, 1] & [4, 1, 1, 1, 1] & 5 & 4096 & 11~\text{\cite{F2020}} & 2 & 256 & 4096 & 1970 & 4 & 9 & 3 \\
5 & 2 & [2, 1, 1, 1, 1] & [2, 2, 2, 1, 1] & 3 & 1024 & 56~\text{\cite{ACF2019}} & 49 & 256 & 215 & 529 & 64 & 56 & 56 \\
5 & 2 & [2, 1, 1, 1, 1] & [2, 2, 2, 1, 1] & 4 & 1024 & 22~\text{\cite{KN2022}} & 13 & 64 & 215 & 119 & 16 & 56 & 64 \\
5 & 2 & [2, 1, 1, 1, 1] & [2, 2, 2, 2, 1] & 3 & 2048 & 102~\text{\cite{ACF2019}} & 85 & 256 & 215 & 529 & 128 & 102 & 102 \\
5 & 2 & [2, 1, 1, 1, 1] & [2, 2, 2, 2, 1] & 4 & 2048 & 36~\text{\cite{KN2022}} & 20 & 64 & 215 & 119 & 32 & 102 & 128 \\
5 & 2 & [2, 1, 1, 1, 1] & [2, 2, 2, 2, 1] & 5 & 2048 & 14~\text{\cite{F2020}} & 6 & 16 & 26 & 19 & 8 & 13 & 11 \\
5 & 2 & [2, 1, 1, 1, 1] & [2, 2, 2, 2, 2] & 3 & 4096 & 153~\text{\cite{ACF2019}} & 146 & 256 & 215 & 529 & 256 & 186 & 186 \\
5 & 2 & [2, 1, 1, 1, 1] & [2, 2, 2, 2, 2] & 4 & 4096 & 53~\text{\cite{KN2022}} & 40 & 64 & 215 & 119 & 64 & 186 & 256 \\
5 & 2 & [2, 1, 1, 1, 1] & [2, 2, 2, 2, 2] & 5 & 4096 & 20~\text{\cite{F2020}} & 8 & 16 & 26 & 19 & 16 & 21 & 19 \\
5 & 2 & [2, 2, 1, 1, 1] & [2, 2, 1, 1, 1] & 3 & 2048 & 93~\text{\cite{ACF2019}} & 80 & 1024 & 744 & 1876 & 128 & 93 & 93 \\
5 & 2 & [2, 2, 1, 1, 1] & [2, 2, 1, 1, 1] & 4 & 2048 & 38~\text{\cite{KN2022}} & 24 & 256 & 744 & 407 & 32 & 93 & 73 \\
5 & 2 & [2, 2, 1, 1, 1] & [2, 2, 1, 1, 1] & 5 & 2048 & 13~\text{\cite{F2020}} & 7 & 64 & 77 & 99 & 8 & 11 & 9 \\
5 & 2 & [2, 2, 1, 1, 1] & [2, 2, 2, 1, 1] & 3 & 4096 & 170~\text{\cite{ACF2019}} & 137 & 1024 & 744 & 1876 & 256 & 170 & 170 \\
5 & 2 & [2, 2, 1, 1, 1] & [2, 2, 2, 1, 1] & 4 & 4096 & 64~\text{\cite{KN2022}} & 32 & 256 & 744 & 407 & 64 & 170 & 146 \\
5 & 2 & [2, 2, 1, 1, 1] & [2, 2, 2, 1, 1] & 5 & 4096 & 22~\text{\cite{F2020}} & 11 & 64 & 77 & 99 & 16 & 19 & 17 \\
6 & 2 & [2, 1, 1, 1, 1, 1] & [2, 1, 1, 1, 1, 1] & 4 & 512 & 16~\text{\cite{KN2022}} & 12 & 256 & 512 & 407 & 16 & 34 & 32 \\
6 & 2 & [2, 1, 1, 1, 1, 1] & [2, 1, 1, 1, 1, 1] & 5 & 512 & 8~\text{\cite{F2020}} & 4 & 64 & 77 & 99 & 8 & 6 & 5 \\
6 & 2 & [2, 1, 1, 1, 1, 1] & [3, 1, 1, 1, 1, 1] & 5 & 2048 & 15~\text{\cite{F2020}} & 4 & 512 & 1943 & 1707 & 8 & 11 & 5 \\
6 & 2 & [2, 1, 1, 1, 1, 1] & [3, 1, 1, 1, 1, 1] & 6 & 2048 & 11~\text{\cite{F2020}} & 2 & 64 & 1943 & 211 & 4 & 11 & 3 \\
6 & 2 & [2, 1, 1, 1, 1, 1] & [2, 2, 1, 1, 1, 1] & 5 & 1024 & 11~\text{\cite{F2020}} & 6 & 64 & 77 & 99 & 8 & 9 & 8 \\
6 & 2 & [2, 1, 1, 1, 1, 1] & [2, 2, 1, 1, 1, 1] & 6 & 1024 & 7~\text{\cite{F2020}} & 2 & 16 & 77 & 14 & 4 & 9 & 3 \\
6 & 2 & [2, 1, 1, 1, 1, 1] & [3, 2, 1, 1, 1, 1] & 6 & 4096 & 16~\text{\cite{F2020}} & 2 & 64 & 1943 & 211 & 4 & 17 & 3 \\
6 & 2 & [2, 1, 1, 1, 1, 1] & [2, 2, 2, 1, 1, 1] & 3 & 2048 & 102~\text{\cite{ACF2019}} & 99 & 1024 & 744 & 1876 & 128 & 107 & 107 \\
6 & 2 & [2, 1, 1, 1, 1, 1] & [2, 2, 2, 1, 1, 1] & 4 & 2048 & 42~\text{\cite{KN2022}} & 26 & 256 & 744 & 407 & 32 & 107 & 128 \\
6 & 2 & [2, 1, 1, 1, 1, 1] & [2, 2, 2, 2, 1, 1] & 3 & 4096 & 186~\text{\cite{ACF2019}} & 170 & 1024 & 744 & 1876 & 256 & 195 & 195 \\
6 & 2 & [2, 1, 1, 1, 1, 1] & [2, 2, 2, 2, 1, 1] & 4 & 4096 & 70~\text{\cite{KN2022}} & 40 & 256 & 744 & 407 & 64 & 195 & 256 \\
6 & 2 & [2, 1, 1, 1, 1, 1] & [2, 2, 2, 2, 1, 1] & 5 & 4096 & 26~\text{\cite{F2020}} & 12 & 64 & 77 & 99 & 16 & 23 & 21 \\
6 & 2 & [2, 2, 1, 1, 1, 1] & [2, 2, 1, 1, 1, 1] & 3 & 4096 & 170~\text{\cite{ACF2019}} & 160 & 4096 & 2621 & 4096 & 256 & 178 & 178 \\
6 & 2 & [2, 2, 1, 1, 1, 1] & [2, 2, 1, 1, 1, 1] & 4 & 4096 & 72~\text{\cite{KN2022}} & 48 & 1024 & 2621 & 1419 & 64 & 178 & 146 \\
6 & 2 & [2, 2, 1, 1, 1, 1] & [2, 2, 1, 1, 1, 1] & 5 & 4096 & 24~\text{\cite{F2020}} & 14 & 256 & 236 & 366 & 16 & 21 & 18 \\
6 & 2 & [2, 2, 1, 1, 1, 1] & [2, 2, 1, 1, 1, 1] & 6 & 4096 & 12~\text{\cite{F2020}} & 6 & 64 & 236 & 79 & 8 & 21 & 16 \\
6 & 2 & [2, 2, 1, 1, 1, 1] & [2, 2, 1, 1, 1, 1] & 7 & 4096 & 6~\text{\cite{F2020}} & 2 & 16 & 36 & 11 & 4 & 5 & 3 \\
\hline
\end{array}
\]
}
    \caption{Some of the non-distance-regular sum-rank metric graphs with $\leq5000$ vertices for which the LP~\eqref{eq:LPsrk} outperforms previously known bounds for $A_q({\bf n},{\bf m},d)$ with $t\leq6$.
    }
    \label{tab:q=2}
\end{table}

\begin{landscape}
\begin{table}[!htbp]
\centering
{\tiny
\[
\begin{array}{|ccllc|c|cc|ccc|ccc|}
\hline
t & q & {\bf n} & {\bf m} & d & |\Omega| & \text{RT}_{d-1} & \text{D}_d & \text{iS}_d  & \text{iH}_d & \text{iE}_d  & \text{S}_d  & \text{SP}_d  & \text{PSP}_d \\ \hline
3 & 3 & [2, 2, 2] & [2, 2, 2] & 3 & 531441 & 4652~\text{\cite{ACF2019}} & 4278 & 6561 & 10845 & 27399 & 6561 & 5478 & 5478 \\
3 & 3 & [2, 2, 2] & [2, 2, 2] & 5 & 531441 & 198~\text{\cite{F2020}} & 33 & 81 & 526 & 343 & 81 & 160 & 100 \\
6 & 3 & [2, 1, 1, 1, 1, 1] & [2, 1, 1, 1, 1, 1] & 5 & 19683 & 86~\text{\cite{F2020}} & 18 & 729 & 3413 & 2772 & 27 & 43 & 22 \\
6 & 3 & [2, 1, 1, 1, 1, 1] & [3, 1, 1, 1, 1, 1] & 5 & 177147 & 170~\text{\cite{F2020}} & 18 & 19683 & 177147 & 177147 & 27 & 97 & 22 \\
6 & 3 & [2, 1, 1, 1, 1, 1] & [3, 1, 1, 1, 1, 1] & 6 & 177147 & 148~\text{\cite{F2020}} & 6 & 729 & 177147 & 10104 & 9 & 97 & 9 \\
7 & 3 & [2, 1, 1, 1, 1, 1, 1] & [2, 1, 1, 1, 1, 1, 1] & 5 & 59049 & 223~\text{\cite{F2020}} & 48 & 6561 & 23180 & 26541 & 81 & 109 & 56 \\
7 & 3 & [2, 1, 1, 1, 1, 1, 1] & [2, 1, 1, 1, 1, 1, 1] & 6 & 59049 & 142~\text{\cite{F2020}} & 18 & 729 & 23180 & 3256 & 27 & 109 & 22 \\
7 & 3 & [2, 1, 1, 1, 1, 1, 1] & [3, 1, 1, 1, 1, 1, 1] & 5 & 531441 & 482~\text{\cite{F2020}} & 48 & 531441 & 531441 & 531441 & 81 & 259 & 56 \\
7 & 3 & [2, 1, 1, 1, 1, 1, 1] & [3, 1, 1, 1, 1, 1, 1] & 6 & 531441 & 406~\text{\cite{F2020}} & 18 & 19683 & 531441 & 531441 & 27 & 259 & 22 \\
7 & 3 & [2, 1, 1, 1, 1, 1, 1] & [2, 2, 1, 1, 1, 1, 1] & 6 & 177147 & 222~\text{\cite{F2020}} & 18 & 729 & 23180 & 3256 & 27 & 222 & 22 \\
8 & 3 & [2, 1, 1, 1, 1, 1, 1, 1] & [2, 1, 1, 1, 1, 1, 1, 1] & 6 & 177147 & 346~\text{\cite{F2020}} & 48 & 6561 & 162987 & 45526 & 81 & 282 & 56 \\
8 & 3 & [2, 1, 1, 1, 1, 1, 1, 1] & [2, 1, 1, 1, 1, 1, 1, 1] & 7 & 177147 & 158~\text{\cite{F2020}} & 12 & 729 & 8536 & 4625 & 27 & 41 & 22 \\
8 & 3 & [2, 1, 1, 1, 1, 1, 1, 1] & [2, 2, 1, 1, 1, 1, 1, 1] & 6 & 531441 & 562~\text{\cite{F2020}} & 48 & 6561 & 162987 & 45526 & 81 & 592 & 56 \\
8 & 3 & [2, 1, 1, 1, 1, 1, 1, 1] & [2, 2, 1, 1, 1, 1, 1, 1] & 7 & 531441 & 233~\text{\cite{F2020}} & 18 & 729 & 8536 & 4625 & 27 & 70 & 22 \\
9 & 3 & [2, 1, 1, 1, 1, 1, 1, 1, 1] & [2, 1, 1, 1, 1, 1, 1, 1, 1] & 5 & 531441 & 1471~\text{\cite{F2020}} & 340 & 531441 & 531441 & 531441 & 729 & 737 & 385 \\
9 & 3 & [2, 1, 1, 1, 1, 1, 1, 1, 1] & [2, 1, 1, 1, 1, 1, 1, 1, 1] & 6 & 531441 & 863~\text{\cite{F2020}} & 139 & 59049 & 531441 & 320840 & 243 & 737 & 145 \\
9 & 3 & [2, 1, 1, 1, 1, 1, 1, 1, 1] & [2, 1, 1, 1, 1, 1, 1, 1, 1] & 7 & 531441 & 362~\text{\cite{F2020}} & 37 & 6561 & 54141 & 32998 & 81 & 96 & 56 \\
\hline
\end{array}
\]
}
    \caption{Some of the non-distance-regular sum-rank metric graphs with $\leq10^6$ vertices for which the LP~\eqref{eq:LPsrk} outperforms previously known bounds for $A_q({\bf n},{\bf m},d)$ with $q\neq 2$.
    }
    \label{tab:q=3}
\end{table}

\begin{table}[!htbp]
\centering
{\tiny
\[
\begin{array}{|ccllc|c|cc|ccc|ccc|}
\hline
t & q & {\bf n} & {\bf m} & d & |\Omega| & \text{RT}_{d-1} & \text{D}_d & \text{iS}_d  & \text{iH}_d & \text{iE}_d  & \text{S}_d  & \text{SP}_d  & \text{PSP}_d \\ \hline
7 & 4 & [2, 1, 1, 1, 1, 1, 1] & [2, 1, 1, 1, 1, 1, 1] & 5 & 1048576 & 1740~\text{\cite{F2020}} & 179 & 65536 & 668893 & 549450 & 256 & 596 & 215 \\
8 & 4 & [2, 1, 1, 1, 1, 1, 1, 1] & [2, 1, 1, 1, 1, 1, 1, 1] & 5 & 4194304 & 6156~\text{\cite{F2020}} & 614 & 1048576 & 4194304 & 4194304 & 1024 & 2055 & 744 \\
8 & 4 & [2, 1, 1, 1, 1, 1, 1, 1] & [2, 1, 1, 1, 1, 1, 1, 1] & 6 & 4194304 & 4274~\text{\cite{F2020}} & 179 & 65536 & 4194304 & 1324989 & 256 & 2055 & 215 \\
8 & 4 & [2, 1, 1, 1, 1, 1, 1, 1] & [2, 1, 1, 1, 1, 1, 1, 1] & 7 & 4194304 & 1537~\text{\cite{F2020}} & 40 & 4096 & 235553 & 80457 & 64 & 200 & 64 \\
\hline
\end{array}
\]
}
    \caption{Some of the non-distance-regular sum-rank metric graphs with $\leq10^7$ vertices for which the LP~\eqref{eq:LPsrk} outperforms previously known bounds for $A_q({\bf n},{\bf m},d)$ with $q>3$.
    }
    \label{tab:q=4}
\end{table}
\end{landscape}

\begin{table*}[!t]
{
    \centering
    {\tiny
\[
\begin{array}{|ccllc|c|cccc|}
\hline
t & q & {\bf n} & {\bf m} & d & |\Omega| & \alpha_{d-1} & \vartheta_{d-1} & \text{RT}_{d-1} & \text{Delsarte's LP} \\
\hline
 2 & 2 &             [2, 1]            &             [2, 1]          & 3 &  32 &   2    &   2 &  2  &  2 \\
 2 & 2 &             [2, 1]            &             [2, 2]       & 3     &  64 &   4    &   4  & 4  & 4 \\
 2 & 2 &             [2, 1]            &             [3, 3]  & 3          & 512 &   8    &   8  & 8  & 8  \\  
 2 & 2 &             [2, 2]            &             [2, 2]     & 3       & 256  &  9   & 10 & 11    & 10 \\  
 2 & 3 &             [2, 1]            &             [2, 2]        & 3    & 729 &   9    &   9  & 9  & 9 \\
 3 & 2 &           [1, 1, 1]           &           [2, 1, 1]          & 3 &  16 &   2    &   2  & 2  & 2 \\
 3 & 2 &           [2, 1, 1]           &           [2, 2, 2] & 3          & 256 &   16   &   16  & 16 & 16 \\ 
 3 & 2 &           [2, 2, 1]           &           [2, 2, 1]    & 3       & 512   & 18    & 20 &  25   &  20 \\ 
 4 & 2 &          [1, 1, 1, 1]         &          [2, 1, 1, 1]     & 3    &  32 &   4    &   4   & 4 & 4 \\
  4 & 2 &          [1, 1, 1, 1]         &          [2, 1, 1, 1]    & 4     &  32 &   2    &   2  & 2   & 2 \\
 4 & 2 &          [2, 1, 1, 1]         &          [2, 1, 1, 1]    & 4     & 128 &   4    &   4  & 4  & 4 \\
 4 & 2 &          [2, 1, 1, 1]        &          [2, 2, 1, 1]         & 3 & 256 &   16   &   16 & 16  & 16 \\ 
 4 & 2 &          [2, 1, 1, 1]         &          [2, 2, 2, 1] & 3        & 512 & \text{time} & 24  & 28 & 24 \\  
 5 & 2 &        [1, 1, 1, 1, 1]        &        [2, 1, 1, 1, 1]   & 3   &  64 &   8  &   8          & 8  &  8 \\
 5 & 2 &        [2, 1, 1, 1, 1]        &        [2, 2, 1, 1, 1]      & 3  & 512 &   32   &   32 & 32 & 32 \\ 
 6 & 2 &     [1, 1, 1, 1, 1, 1] &  [2, 1, 1, 1, 1, 1]   & 3                & 128  &   9  & 12 & 12   & 12 \\
 6 & 2 &   [2, 1, 1, 1, 1, 1]    &    [2, 1, 1, 1, 1, 1]   & 3           & 512  &   \text{time}  &  32 & 32  & 32 \\
 7 & 2 &  [1, 1, 1, 1, 1, 1, 1]  &  [2, 1, 1, 1, 1, 1, 1]     & 3          & 256  &  \text{time}  & 25 &  25  & 21 \\ 
 8 & 2 &    [1, 1, 1, 1, 1, 1, 1, 1]   &    [2, 1, 1, 1, 1, 1, 1, 1] & 3   & 512  &  \text{time}  & 42 &  42  & 42 \\
\hline
\end{array}
\]
}
}
    \caption{Some sum-rank metric graphs on up to $1000$ vertices with the exact $(d-1)$-independence number $\alpha_{d-1}$, $\vartheta_{d-1}$, Ratio-type bound, and Delsarte's LP bound computed. We write `time' whenever the computation time exceeds $2$ hours on a standard laptop.
    }
    \label{tab:Lovasz}
\end{table*}

\vspace{-1.6cm}

\begin{table}[!htbp]
    \centering
    {\small
$$
\begin{array}{|cc|c|ccc|} \hline
\Gamma & d & |\Omega| & \text{Ratio-type} & \text{Delsarte's LP}  & \text{Singleton} \\ \hline
\Bil_2(3,3) & 3 & 512 & 8 & 8 & 8 \\
\Bil_2(3,4) & 3 & 4096 & 16 & 16 & 16 \\
\Bil_2(3,5) & 3 & 32768 & 32 & 32 & 32 \\
\Bil_2(3,6) & 3 & 262144 & 64 & 64 & 64 \\
\Bil_2(3,7) & 3 & 2097152 & 128 & 128 & 128 \\
\Bil_2(4,4) & 3 & 65536 & 256 & 256 & 256 \\
\Bil_2(4,4) & 4 & 65536 & 16 & 16 & 16 \\
\Bil_2(4,5) & 3 & 1048576 & 1024 & 1024 & 1024 \\
\Bil_2(4,5) & 4 & 1048576 & 32 & 32 & 32 \\
\Bil_3(3,3) & 3 & 19683 & 27 & 27 & 27 \\
\Bil_3(3,4) & 3 & 531441 & 81 & 81 & 81 \\
\Bil_4(3,3) & 3 & 262144 & 64 & 64 & 64 \\
\Bil_5(3,3) & 3 & 1953125 & 125 & 125 & 125 \\ \hline
\end{array}
$$
}
\caption{The list of rank-metric graphs, \textit{i.e.} sum-rank metric graphs with $t=1$, and the respective values of Delsarte's LP, Ratio-type, and Singleton bounds.}
    \label{tab:t=1}
\end{table}

\end{document}